\documentclass[12pt,a4paper]{amsart}
\usepackage{amsmath, amsfonts, xifthen, latexsym, amssymb, amsthm, amscd}
\usepackage[utf8]{inputenc}
\usepackage{graphicx}
\usepackage{url}
\usepackage{float}
\usepackage[center]{caption}
\usepackage{hyperref}
\hypersetup{colorlinks=true,citecolor=blue,filecolor=blue,linkcolor=blue,urlcolor=blue}
\usepackage[margin=1.4in]{geometry}

\newtheorem{theorem}{Theorem}
\newtheorem{lemma}{Lemma} [section]
  
\newtheorem{proposition}[lemma]{Proposition}

\newtheorem{conjecture}{Conjecture}

\newtheorem{corollary} [lemma]{Corollary}

\theoremstyle{definition}
\newtheorem{definition}{Definition}[section]
\newtheorem{question}{Question}[section]

\newtheorem{remark}{Remark}

\newtheorem{example}{Example}


\newcommand{\eps}{{\epsilon}}

\renewcommand{\phi}{{\varphi}}

\newcommand{\Supp}{\operatorname{supp}}



\renewcommand{\ge}{\geqslant}
\renewcommand{\le}{\leqslant}
\newcommand{\X}{X}\newcommand{\cP}{\mathcal{P}} \newcommand{\cF}{\mathcal{F}}  \newcommand{\cB}{\mathcal{B}}
 \newcommand{\cE}{\mathcal{E}} 

\newcommand\R{\mathbb R}

\newcommand\N{\mathbb N}

\newcommand\cG{\mathcal{G}} \newcommand\cT{\mathcal{T}}

\newcommand{\cM}{\mathcal{M}}
\newcommand{\cQ}{\mathcal{Q}}


\newcommand{\actson}{\curvearrowright}

\newcommand{\cS}{\mathcal{S}}

\newcommand{\cal}[1]{{\mathcal #1}}

\newcommand{\vi}{\vskip 0.1in \noindent}
\usepackage{etoolbox}
\newtoggle{no_cases}\toggletrue{no_cases}
\newcommand{\case}[2][]{\iftoggle{no_cases}{\left\{\begin{array}{ll}#2 & #1}{\\#2 & #1}\togglefalse{no_cases}}
\newcommand{\esac}{\end{array}\right.\toggletrue{no_cases}}

\setlength{\parskip}{0.5em}

%
%

\begin{document}
\title[Uniform Borel Amenability]{Uniform Borel Amenability} 
\subjclass[2020]{54H05}

\author{G\'{a}bor Elek and \'{A}d\'{a}m Tim\'{a}r}

\begin{abstract}
We study a uniform, quantitative form of the amenability-hyperfiniteness paradigm for bounded-degree 
Borel graphs generating countable Borel equivalence relations. 
We introduce 
\emph{uniform Borel amenability} and prove that it is equivalent to \emph{randomized Borel hyperfiniteness}, 
 a probabilistic version of hyperfiniteness.
Consequences are three strengthenings of the Connes-Feldman-Weiss theorem. 
In the setting of uniformly Borel amenable F\o lner graphs (e.g. Borel graphs of not necessarily free actions of amenable groups or Borel graphs of subexponential growth), we establish an analogous equivalence to randomized Borel almost finiteness. We further obtain measure-theoretic structural results, including almost finiteness outside a $\mu$-null invariant set extending a recent result of Conley et al. for free amenable actions, and an Ornstein--Weiss type packing theorem that is uniform over all invariant measures. 
Finally, we show that uniformly Borel amenable graphs are hyperfinite modulo a compressible invariant set, i.e., after removing a Borel invariant set that is of measure zero for every invariant probability measure. 
\end{abstract}
\maketitle
\noindent
\tableofcontents
\newpage 
\section{Introduction}

\subsection{Motivations}

The study of \emph{Borel equivalence relations}, initiated by the seminal works of Dye and Feldman-Moore, has become a central theme connecting descriptive set theory, ergodic theory, and operator algebras. Two notions, in particular, have shaped our understanding of countable equivalence relations: \emph{amenability} and \emph{hyperfiniteness}.

\noindent
Von Neumann introduced amenability for groups through the existence of an invariant mean, motivated by the Banach--Tarski paradox, and Følner gave a combinatorial characterization. Connes later extended the concept to von Neumann algebras and proved that for algebras with separable preduals, \emph{amenability is equivalent to hyperfiniteness}.

\noindent
In the Borel setting, the analogous conjecture has remained one of the central open problems in the field (posed by Weiss for amenable group actions \cite{Weiss84}, and later by Kechris in the general setting of countable Borel equivalence relations \cite{Kechris93}). This conjecture is still open and proved only for rather restricted classes of amenable groups (see \cite{Conley23}). First, let us recall the definitions of amenability and hyperfiniteness for countable Borel equivalence relations.
\vi
Let $\cE$ be a countable Borel equivalence relation on the 
standard Borel space $(\X,\cB)$, which we will usually refer to as $\X$ for simplicity. Then $\cE$ is \textbf{Borel amenable} if the following two conditions are satisfied.
\begin{enumerate}
\item For every $n$ there exist non-negative Borel functions $\{p_n:\cE\to\R\}_{n=1}^\infty$ such that for all $x\in \X$ we have $\sum_{y\equiv_\cE x} p_n(x,y)=1\,,$ and
\item for all pairs $x\equiv_\cE z$:
$$\lim_{n\to \infty} \sum_{y\equiv_\cE x} |p_n(x,y)-p_n(z,y)|\to 0\,,$$
where $y\equiv_\cE x$ denotes the equivalence of $x$ and $y$ under $\cE$.
\end{enumerate}
\noindent
Borel amenability in the present form was defined in \cite{Kechris93}, based on the measureable variant due to Zimmer \cite{Zimmer77} and Connes-Feldman-Weiss \cite{CFW}.
\vi
An equivalence relation is called finite if all its classes are finite.
The countable Borel equivalence relation $\cE$ is \textbf{hyperfinite} if $\cE=\cup_{i=1}^\infty \cE_i$
for some finite equivalence relations
$\cE_1\subset \cE_2\subset \cE_3 \subset\dots$ with the property that all equivalence classes of $\cE_n$ have bounded size. It is easy to show that hyperfiniteness implies Borel amenability.

\begin{conjecture}[Main Conjecture {\cite{Weiss84, Kechris93}}] \label{mainconjecture}
For countable Borel equivalence relations, amenability is equivalent to hyperfiniteness.
\end{conjecture}
\subsection{Uniformly Borel amenable graphs. } \label{UBAG}
\noindent
The conjecture above, which lies at the crossroads of measurable dynamics and descriptive combinatorics, continues to guide much of the structural research on Borel equivalence relations.
\noindent
In this paper, we investigate a \emph{uniform version} of this problem for \emph{bounded-degree Borel graphs}. We establish the conjecture in this uniform setting, see Definitions \ref{defuniborel} and \ref{URH}.

\begin{theorem} \label{mainletisztult}
Uniform Borel amenability is equivalent to randomized Borel hyperfiniteness.
\end{theorem}
\noindent
As is the case of Conjecture \ref{mainconjecture}, the direction from amenable to hyperfinite is the difficult one here. To our knowledge, this is the first equivalence result toward Conjecture 1 in the purely Borel setting.
An immediate corollary is the following.

\begin{corollary}[Randomized Weiss Conjecture]
Let $\cE$ be the orbit-equivalence relation of the free Borel action of a finitely generated countable amenable group on the Borel set $X$. Then there is a random sequence of finite Borel subequivalence relations $\cE_1\subset\cE_2\subset\ldots$ such that for every pair $x,y\in X$, $x\equiv_\cE y$ if and only if $x\equiv_{\cup \cE_n} y$ almost surely.
\end{corollary}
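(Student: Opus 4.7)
The plan is to realise $\cE$ as the connectedness relation $\cE_G$ of an explicit bounded degree Borel graph $G$ on $X$, verify that $G$ is uniformly Borel amenable, and then invoke Theorem \ref{mainletisztult}. This reduces the corollary to the observation that free actions of a finitely generated amenable group give rise to a \emph{uniform} version of Borel amenability, essentially because the Schreier graph at every point is canonically isomorphic to the Cayley graph of the group.

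First I would fix a finite symmetric generating set $S\subset\Gamma$ with $e\in S$, and let $G$ be the induced Schreier graph on $X$: put an edge between $x$ and $y$ whenever $y=s\cdot x$ for some $s\in S\setminus\{e\}$. Since the action is Borel, $G$ is a Borel graph of degree at most $|S|-1$; since $S$ generates $\Gamma$ and the action is free, one has $\cE_G=\cE$ as equivalence relations on $X$.

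Next I would verify that $G$ is uniformly Borel amenable in the sense of Definition \ref{defuniborel}. Fix a F\o{}lner sequence $(F_n)_{n=1}^\infty$ in $\Gamma$, so that $|sF_n\triangle F_n|/|F_n|\to 0$ for every $s\in S$. Freeness of the action means that for every $x\in X$ the orbit map $\gamma\mapsto\gamma\cdot x$ is a graph isomorphism from the Cayley graph $\Cay(\Gamma,S)$ onto the $G$-component of $x$. Therefore the assignment $x\mapsto F_n\cdot x$ is a Borel map producing a Borel family of F\o{}lner sets in $G$ whose F\o{}lner quality depends only on $n$, not on $x$. This is exactly the uniformity that Definition \ref{defuniborel} asks for: a single Borel choice of approximate F\o{}lner sets that works equally well at every point.

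Finally I would apply Theorem \ref{mainletisztult} to $G$, which yields that $G$ is randomized hyperfinite in the sense of Definition \ref{URH}. Unpacking that definition gives a random sequence of finite Borel subequivalence relations $\cE_1\subset\cE_2\subset\cdots$ of $\cE_G$ whose union almost surely equals $\cE_G=\cE$, which is the statement of the corollary. There is no real obstacle beyond the main theorem itself; the only step that needs a little care is matching the precise formulation in Definitions \ref{defuniborel} and \ref{URH} to the universal F\o{}lner family $\{F_n\cdot x\}$, but because the Cayley graph is the uniform model for every orbit this is essentially formal.
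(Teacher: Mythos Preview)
Your proposal is correct and is exactly the route the paper takes: the corollary is stated as immediate from Theorem~\ref{mainletisztult}, with Example~\ref{peldak}(1) supplying the uniform Borel amenability of the Schreier graph via the F\o lner sets $F_n\cdot x$. One small point of care in your final paragraph: Definition~\ref{URH} gives, for each fixed pair $x\sim_G y$, that they are eventually in the same class almost surely (and hence the same for any fixed $\cE$-equivalent pair via a finite path and the monotonicity $\cE_1\subset\cE_2\subset\cdots$), which is precisely the per-pair statement of the corollary---not the stronger assertion that $\cup\cE_n=\cE$ for a single full-measure set of sequences simultaneously over all pairs.
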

\noindent
Theorem \ref{mainletisztult} differs from the Borel graph version of Conjecture \ref{mainconjecture} in two ways. One is the extra requirement of {\it uniform} Borel amenability, which is satisfied by most examples of interest, see Example \ref{peldak}. The other is the relaxation of hyperfiniteness, where a {\it random} hyperfinite exhaustion is allowed. We mention that the idea of replacing deterministic objects by their random counterpart has opened some very fruitful directions recently (think of Invariant Random Subgroups as alternatives to normal subgroups, \cite{AGV}). Our definition of randomized hyperfiniteness can be seen as an attempt of similar nature.

\vi A \textbf{Borel graph} on a standard Borel space $(X, \mathcal{B})$ is a graph without loops, 
whose edge set is Borel in $X \times X$.  
If $G$ is a Borel graph, then $\cE_G$ denotes the Borel equivalence relation on $X$ defined by its connected components. Borel graphs serve not only as a tool for studying Borel equivalence relations, but also as objects of independent interest—a perspective developed within Borel combinatorics. Let $G$ be a bounded degree Borel graph on $B$. 
By a classical result of Kechris, Solecki and Todorcevic \cite{KST}, there exist $q>0$ and a Borel coloring $\chi:B\to [q]$ with colors $\{1,2,\dots, q\}=[q]$ such that neighboring vertices are colored differently. Consequently, for any given $r>0$ there exist a Borel coloring $\chi$ with finitely many colors such that if $d_G(x,y)\leq r$ then $\chi(x)\neq \chi(y)$. This coloring method can also be used to obtain proper edge-colorings. 

\vi
Every Borel graph of bounded degrees can be viewed as the Schreier graph of the Borel action of a finitely generated group, using the mentioned edge-coloring.
If $\beta$ is another action such that $G$ is the Schreier graph of $\beta$, then a probability measure $\mu$ is invariant under $\alpha$ if and only if it is invariant under $\beta$. (For a proof of this, one may refer to \cite{DJK} where invariance with respect to a Borel equivalence relation is defined, and it is shown that invariance under $\beta$ is the same as invariance with respect to $\cE_G$.)
 \begin{definition}[Uniform Borel amenability] \label{defuniborel}
We say that a Borel graph $G$ is \textbf{uniformly Borel amenable} if $G$ has bounded degree and satisfies the following conditions.
\begin{enumerate}
\item[(1)] For every $n \ge 1$, there exists a non-negative Borel function 
$p_n : \cE_G \to \mathbb{R}$ such that, for all $x \in X$,
\[
  \sum_{y \,\cE_G\, x} p_n(x, y) = 1 .
\]
\item[(2)] If $x \sim_G z$, then
\[
  \sum_{y \,\cE_G\, x} |p_n(x, y) - p_n(z, y)| < \frac{1}{n} .
\]
\item[(3)] There exists $r_n > 0$ such that $p_n(x, y) = 0$ whenever $d_G(x, y) > r_n$.
\end{enumerate}
\end{definition}
\noindent
\begin{example}\label{peldak}{\,}
The next examples show that uniformly Borel amenable graphs form a wide class, which covers the cases where Conjecture \ref{mainconjecture} was first formulated.
\begin{enumerate} 
\item Let $\alpha:\Gamma\actson \X$ be an arbitrary Borel action
of a finitely generated amenable group $\Gamma$. The Schreier graph of $\alpha$ is uniformly Borel amenable with respect to any symmetric generating system. See Proposition 7.2 in \cite{elektimar} for a construction of the witness functions $p_n(x,y)$.
\item By definition, the Schreier graphs of topologically amenable free actions of (possibly nonamenable) finitely generated groups on a compact metric space are uniformly Borel amenable.
\item Bounded degree Borel graphs of subexponential growth are uniformly Borel amenable. A nice local construction is given by Tessera \cite{Tessera} for the witness functions on a graph of subexponential growth that can easily be extended to Borel graphs of subexponential growth.
\end{enumerate}
\end{example}
For Borel graphs, hyperfiniteness can be stated in the following way; it is straightforward to check that this holds exactly when the equivalence relation induced by the graph is hyperfinite.
\begin{definition}[Hyperfiniteness]\label{hyperfinitegraf}
A bounded-degree Borel graph $G$ is \textbf{hyperfinite} if there exists a sequence 
$\cE_1 \subset \cE_2 \subset \cE_3 \subset \cdots$ of finite Borel equivalence relations such that:
\begin{enumerate}
\item[(1)] For every $n$, there is a uniform bound on the diameters of the equivalence classes of $\cE_n$;
\item[(2)] For every pair of adjacent vertices $x,y$ in $G$, the points $x$ and $y$ lie in the same $\cE_n$-class for all sufficiently large $n$.
\end{enumerate}
Condition~(1) can be omitted without changing the definition, but it is standard in the literature.  
\end{definition}
\noindent
We consider a randomized variant of the definition, which is slightly weaker.
\begin{definition}[Randomized hyperfiniteness] \label{URH}
A Borel graph $G$ is \textbf{randomized
 (Borel) hyperfinite} if there exists a random sequence 
$\cE_1 \subset \cE_2 \subset \cE_3 \subset \cdots$ of finite Borel equivalence relations such that:
\begin{enumerate}
\item[(1)] For every $n$, the equivalence classes of $\cE_n$ have uniformly bounded diameter;
\item[(2)] For every pair of adjacent vertices $x,y$ in $G$, the probability that $x$ and $y$ are not in the same $\cE_n$-class is at most $1/n$.
\end{enumerate}
\end{definition}

 \vi Suppose $\cE$ is a countable Borel equivalence relation on $\X$ and $\mu$ is a Borel probability measure on $X$. Then $\cE$  is called 
\textbf{$\mu$-amenable} if there exists a Borel subset $A\subset X$ with $\mu(A)=1$ such that the restriction $\cE|_A$ of $\cE$ on $A$ is Borel amenable. Similarly, $\cE$ is \textbf{$\mu$-hyperfinite} if there exists a Borel subset  $A\subset X$, $\mu(A)=1$, such that $\cE|_A$ is hyperfinite. A set $A\subset X$ is called $\cE$-invariant if it is a union of equivalence classes. In their seminal paper, Connes, Feldman and Weiss \cite{CFW}   proved that Conjecture \ref{mainconjecture} holds for measurable equivalence relations. 

\noindent
\textbf{Connes-Feldman-Weiss Theorem.} \textit{
Let $\cE$ be a countable Borel equivalence relation on $X$ and $\mu$ a Borel probability measure on $X$. Then $\cE$ is $\mu$-amenable if and only if it is $\mu$-hyperfinite. In particular, if $\cE$ is Borel amenable then $\cE$ is $\mu$-hyperfinite with respect to any Borel probability measure $\mu$ on $X$.}

\noindent
We strengthen this theorem in our context, by showing that uniform Borel amenability implies $\mu$-hyperfiniteness \emph{simultaneously} for any countable family of probability measures.

\begin{theorem} [Simultaneous Connes-Feldman-Weiss] \label{countable}
Let $G$ be a uniformly Borel amenable graph and $\{\mu_n\}_{n=1}^\infty$ a countable collection of probability measures. 
Then there exists an $\cE_G$-invariant set $A\subseteq X$ such that $\cE_G|_A$ is hyperfinite and $\mu_n(A)=1$ for all $n\ge1$.
\end{theorem}
\noindent
The Connes-Feldman-Weiss Theorem can be reformulated in the following way: if $G$ is a bounded-degree Borel graph generating $\cE$, then for every $\varepsilon>0$ there exists $K>0$ such that one can remove a Borel set $A\subseteq X$ with $\mu(A)\le\varepsilon$ so that all connected components of the remaining graph have size at most $K$.
\noindent
Our next theorem is a uniform strengthening of this with the choice of $K$ not depending on $\mu$.

\begin{theorem} [Uniform Connes-Feldman-Weiss]  \label{unihyp}
Let $G$ be a uniformly Borel amenable graph on a standard Borel space $X$.  
Then for every $\varepsilon>0$ there exists $K>0$ such that for \emph{every} probability measure $\mu$ one can remove a Borel set $A\subset X$ with $\mu(A)\le\varepsilon$, and all remaining components have size at most~$K$.
\end{theorem}
\noindent
The next theorem is a more general version of the difficult part of Theorem \ref{mainletisztult}, and will be used to prove it.

\begin{theorem}[Strong Connes-Feldman-Weiss]\label{CFWtetel}
Suppose that $G$ is a uniformly Borel amenable graph on $\X$. 
Then there is a random hyperfinite subequivalence relation $\cE$ of $\cE_G$, given as the union of a
random sequence 
$\cF_1\subset\cF_2\subset\ldots$ of Borel packings, such that for every Borel probability measure $\mu$ on $X$, for almost every $\{\cF_n\}_{n=1}^\infty$ we have $\mu \bigl([\cE]\bigr)=1$. Moreover, the $\{\cF_n\}_{n=1}^\infty$ can be chosen so that for every pair $x,y$ of adjacent vertices, the probability that $x$ and $y$ are not in the same tile of $\cF_n$ is at most $1/n$.
\end{theorem}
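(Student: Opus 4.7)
The plan is to build the required random sequence $\cF_1\subset\cF_2\subset\cdots$ inductively by converting the Reiter-type functions $p_n$ supplied by Definition \ref{defuniborel} into random Borel equivalence relations via an exponential-clocks coupling, and to make the sequence nested by iterating the construction on successive quotients. The basic one-step building block I would use is the following: for any bounded-degree Borel graph $H$ carrying Reiter functions $\{q_M\}$ and any $\delta>0$, pick $M$ large enough that $\|q_M(u,\cdot)-q_M(v,\cdot)\|_1\le 2\delta$ for every edge $uv$ of $H$, take Borel i.i.d.\ rate-$1$ exponentials $\{E_z\}_{z\in H}$, and set $Y(x):=\arg\min_z E_z/q_M(x,z)$. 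Then each $Y(x)$ is marginally distributed as $q_M(x,\cdot)$, and the standard exponential-clock coupling gives $\Pr(Y(u)\neq Y(v))\le\tfrac12\|q_M(u,\cdot)-q_M(v,\cdot)\|_1\le\delta$ for every edge $uv$. Declaring $u\sim v$ iff $Y(u)=Y(v)$ produces a random Borel equivalence relation whose classes have $H$-diameter at most $2r_M$ (since $Y(u)=Y(v)=z$ forces both $u,v$ within $r_M$ of $z$) and in which every edge is internal to a single class with probability at least $1-\delta$.

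To keep the sequence nested, I would iterate the building block on successive quotients. Set $\cF_0$ equal to the identity equivalence relation on $X$. Given $\cF_{n-1}$, a Borel equivalence relation with classes of uniformly bounded $G$-diameter $D_{n-1}$, form the quotient bounded-degree Borel graph $G_{n-1}:=G/\cF_{n-1}$ whose vertices are the $\cF_{n-1}$-classes, with adjacency inherited from $G$. Descend the Reiter functions $p_M$ of $G$ to $G_{n-1}$ via $\bar p_M(\bar x,\bar y):=\tfrac1{|\bar x|}\sum_{x\in\bar x,\,y\in\bar y}p_M(x,y)$; a triangle inequality along a $G$-edge realizing an adjacency $\bar x\sim\bar z$ in $G_{n-1}$, combined with the Reiter estimate on $G$ applied to paths of length $\le D_{n-1}$ within $\cF_{n-1}$-classes, yields $\|\bar p_M(\bar x,\cdot)-\bar p_M(\bar z,\cdot)\|_1\le(2D_{n-1}+1)/M$. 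Applying the building block to $H=G_{n-1}$ with $q=\bar p_{M_n}$, for $M_n$ chosen so that this bound is at most $2/n$, yields a random equivalence relation on the classes of $\cF_{n-1}$; pull this back to $X$ as $\cF_n$ by declaring $x\equiv_{\cF_n}y$ iff $[x]_{\cF_{n-1}}$ and $[y]_{\cF_{n-1}}$ lie in the same class of the quotient relation. By construction $\cF_{n-1}\subset\cF_n$, the $\cF_n$-classes have uniformly bounded $G$-diameter $D_n$, and for every $G$-edge $xy$ either $[x]_{\cF_{n-1}}=[y]_{\cF_{n-1}}$ (so $x\equiv_{\cF_n}y$ automatically), or their images are adjacent in $G_{n-1}$ and the building block's bound gives $\Pr(x\not\equiv_{\cF_n}y)\le 1/n$.

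Since every vertex of $X$ belongs to some equivalence class of each $\cF_n$ (at worst a singleton), $[\cF_n]=X$ for every $n$ and every realization, so $[\cE]=\bigcup_n[\cF_n]=X$ and $\mu([\cE])=1$ holds for every Borel probability measure $\mu$. The main statement and the ``Moreover'' clause are therefore obtained simultaneously by this single construction.

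The main technical obstacle I expect is the Borel bookkeeping for the quotient step at each level. One must realize $G_{n-1}$ as a bounded-degree Borel graph (for instance via a Borel transversal for $\cF_{n-1}$), verify that the descended functions $\bar p_M$ are Borel with the advertised $\ell^1$-variation bound, and ensure that the random equivalence relations produced at each level assemble into a genuine random Borel subequivalence relation of $\cE_G$ in a jointly measurable way. The quantitative Reiter estimate on the descended functions is a short triangle-inequality along a path within an $\cF_{n-1}$-class, but making the entire construction uniformly Borel across the induction requires care.
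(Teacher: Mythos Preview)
Your approach is quite different from the paper's and, if it worked, would be slicker: since your packings cover all of $X$ by construction, the assertion $\mu([\cE])=1$ would be automatic, bypassing the Borel--Cantelli argument the paper needs. The paper instead first proves the tight multipackability theorem (Theorem~\ref{main}), converting the Reiter functions $p_n$ into a \emph{finite} family of deterministic Borel packings at each scale; the randomness is then just a uniform choice from these finite families, and nesting is obtained via the join $\vee$ of Definition~\ref{def7} rather than by passing to quotients.

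There are, however, two genuine gaps. First, the bound $\Pr(Y(u)\neq Y(v))\le\tfrac12\|q_M(u,\cdot)-q_M(v,\cdot)\|_1$ is false for the exponential-clock coupling: with $q_M(u,\cdot)=(\tfrac12,\tfrac12,0)$ and $q_M(v,\cdot)=(0,\tfrac12,\tfrac12)$ one has $Y(u)=Y(v)$ iff $E_2=\min(E_1,E_2,E_3)$, so $\Pr(Y(u)\neq Y(v))=\tfrac23>\tfrac12=\tfrac12\|q_M(u,\cdot)-q_M(v,\cdot)\|_1$. The total-variation bound is attained only by the maximal coupling, which cannot be realized by a single family of shared clocks across all of $X$. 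Second, and more fundamentally, ``Borel i.i.d.\ exponentials $\{E_z\}_{z\in H}$'' is not a meaningful object when $V(H)$ is uncountable: the product measure on $\R_+^X$ is not carried by Borel functions $X\to\R_+$, so for a typical realization the map $x\mapsto Y(x)$ is not Borel and the resulting ``packing'' is not a Borel packing at all. This is precisely the obstacle that forces the paper's detour through Theorem~\ref{main}: by reducing to finitely many colour permutations, the randomness lives on a standard (indeed finite) probability space and every outcome is a genuine Borel packing. You could try to repair this by indexing the exponentials by a finite Borel colouring of $X$ separating points at distance $\le 2r_M+1$, but that is essentially the content of the proof of Theorem~\ref{main}; moreover, your iterated-quotient step would then require redoing this colouring on a \emph{random} quotient graph at each level, with attendant measurability issues that you acknowledge but do not resolve.
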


\noindent
See Definition \ref{Borelpacking} for the definition of a Borel packing.
In the usual proof of the Connes-Feldman-Weiss theorem, for the given $\mu$ one constructs a sequence of $\{\cE_n\}_{n=1}^\infty$ that witnesses hyperfiniteness outside of a set of 0 $\mu$-measure. Our strengthening constructs the sequence first, and then shows that for every $\mu$ the random sequence almost surely does the job in the Connes-Feldman-Weiss theorem.
\begin{remark}
Borel amenability and hyperfiniteness are properties of Borel graphs that are invariant under orbit equivalence. By contrast, uniform Borel amenability and the Følner property are plainly not preserved under orbit equivalence: indeed, every bounded-degree Borel graph 
$G$ is orbit equivalent to some Borel graph 
$H$ that contains every finite graph of the same degree bound as an induced subgraph. Nevertheless, Borel amenability and being a F\o lner graph is clearly invariant under\emph{ bounded orbit equivalence }\cite{KerrLi}.
\end{remark}
\noindent
\subsection{Uniformly Borel amenable F\o lner graphs.} \label{UBFG}
In a bounded-degree graph $H$, a finite subset $L$ is \textbf{$\varepsilon$-Følner} (in $H$) if
\[
  \frac{|\partial L|}{|L|} < \varepsilon ,
\]
where $\partial L = \{ x \in L : \exists\, y \notin L \text{ with } x \sim y \}$.
We say that $L$ is \textbf{$(\varepsilon, k)$-Følner} if it is $\varepsilon$-Følner and has diameter at most $k$ in $H$.

\begin{definition}[Følner graph {\cite{elektimar}}]
A bounded-degree graph $H$ is a \textbf{Følner graph} if, for every $\varepsilon > 0$, 
there exists an integer $r > 0$ such that, for all $x \in X$, there is an $\varepsilon$-Følner set contained in the ball $B_r(x)$.
\end{definition}
\noindent
\begin{example}\label{peldak2}
(1) and (3) in Example \ref{peldak} are always uniformly Borel amenable F\o lner graphs.
\end{example}
\noindent
\begin{definition}\label{invariance}
If $G$ is a Borel graph on $X$ then a probability measure $\mu$ on $X$ is called $G$-invariant if it is invariant under the Borel action $\alpha$ of some finitely generated group such that $G$ is the Schreier graph of the action $\alpha$.
\end{definition}\noindent 
Note that by a classical averaging argument, if the graph of a continuous action of a countable group is Følner, then it automatically admits an invariant probability measure.  A second goal of this paper is to carry out a detailed analysis of uniformly Borel amenable Følner graphs, in close analogy with the general theory of uniformly Borel amenable graphs discussed above. In this setting, invariant measures take the place of general probability measures.
\vi
First, we review the different notions of Borel packings and tilings used throughout the paper. 
\begin{definition}[Borel packing]\label{Borelpacking}
Let $G$ be a bounded-degree Borel graph on $X$, let $k > 0$, and let $X' \subseteq X$ be a Borel set.  
Consider a Borel subequivalence relation $\cE'$ of the restriction of $\cE_G$ to $X'$ such that each equivalence class of $\cE'$ 
has diameter at most $k$ in $G$.  
The family of $\cE'$-classes on $X'$ is called a \textbf{Borel packing} of $X$, and each class is called a \textbf{tile}.  
Given a Borel packing $\mathcal{P}$ as above, we write $[\mathcal{P}]$ for the union of its tiles (that is, $X'$). We call a Borel packing a \textbf{Borel tiling} if its tiles cover the entire space $X$. 
For an integer $s \ge 1$, we say that a Borel packing is \textbf{$s$-separated} 
if $d_G(x, y) > s$ whenever $x$ and $y$ lie in different tiles of $\mathcal{P}$.
\end{definition}
\noindent
A Borel packing $\cT$ is called \textbf{$\varepsilon$-Følner} if all its tiles are $\varepsilon$-Følner in $G$. The packing $\cT$ is $(\epsilon, k)$-F\o lner if all the tiles are of diameter at most $k$. If for any $\epsilon>0$ there exists a $k>0$ such that the graph $G$ has an $(\epsilon, k)$-F\o lner tiling, then we call $G$ \textbf{Borel almost finite}. 
\noindent
The adjective ``F\o lner'' is used in several ways in our definitions, but it should not cause any confusion because the objects of use are always different: finite subgraphs, (infinite) graphs, or packings.
\vi
Now, we formulate a conjecture that serves as our guiding principle for the class of uniformly Borel amenable Følner graphs.  

\begin{conjecture} [Borel Almost Finiteness Conjecture]  Every uniformly Borel amenable F\o lner graph is Borel almost finite. \label{borelalmost}
\end{conjecture}
\noindent
The converse does not hold: there exist Borel almost finite graphs which are not Borel amenable \cite{Ara}. 
The conjecture was originally proposed by Marks for free amenable actions (\cite{Marks}, Problem 3.2).  Topological almost finiteness was introduced by Matui for Cantor minimal dynamical systems 
and it is conjectured that continuous free Cantor minimal actions are always topologically almost finite. A question closely related to our conjecture—namely, that topological amenability together with the existence of an invariant measure implies almost finiteness—was previously posed by Suzuki \cite{Suzuki}.
\vi One of the key new notions introduced in our paper is that of multipacking. \begin{definition}[Multipackability] \label{multipack}
Let $G$ be a bounded-degree Borel graph on $X$.  
We say that $G$ is \textbf{tightly Borel multipackable} (or simply \textbf{tightly multipackable}) 
if for every $\varepsilon > 0$ there exist $m > 0$ and Borel packings $\{\cT_i\}_{i=1}^m$ 
such that, for every $x \in X$, the number of indices $i$ with $x \notin [\cT_i]$ is less than $\varepsilon m$.  
Any such family $\{\cT_i\}_{i=1}^m$ is called an \textbf{$\varepsilon$-tight multipacking}. Whenever each of the packings listed above is $\varepsilon$-F\o lner, we say that the family forms a $\varepsilon$-F\o lner multipacking.
\end{definition}
\begin{definition}[Randomized almost finiteness]
A bounded-degree Borel graph $G$ is \textbf{randomized (Borel) almost finite} 
if, for every $\varepsilon > 0$, there exists an $\varepsilon$-tight $\varepsilon$-Følner multipacking for $G$.
\end{definition}
\noindent
Analogously to Theorem \ref{mainletisztult}, we obtain an equivalence theorem in the setting of uniform Borel amenable Følner graphs.
\begin{theorem} \label{mainletisztult2}
A bounded-degree Borel graph is uniformly Borel amenable and F\o lner if and only if it is randomized Borel almost finite.
\end{theorem}
\noindent
The measurable variant of Conjecture \ref{borelalmost} for free amenable actions is the main result of \cite{Conley18} by Conley, Jackson, Kerr, Marks, Seward and Tucker-Drob. We extend this result for the most general case (as an analogue of the Connes-Feldman-Weiss Theorem).
\begin{theorem} \label{almostfinite}
Let $(G,X)$ be a uniformly Borel amenable F\o lner graph, and let $\mu$ be an invariant probability measure on $X$. Then $G$ is Borel almost finite outside a $\mu$-null invariant set.
\end{theorem}
\noindent
Now, let us recall the  \emph{Ornstein--Weiss theorem} \cite{OW}, a foundational result on Borel graphs, in the bounded-degree setting. 
\vi
\textbf{Ornstein-Weiss Theorem:} \textit{Let $G$ be the Schreier graph of a free action of a finitely generated amenable group and $\mu$ be an invariant probability measure on $G$. Then there exists an $\eps$-F\o lner Borel packing on $G$, such that the
$\mu$-measure of the complement of the packing is less than $\eps$.}

\noindent
We extend this theorem in two directions: a larger class of graphs is considered (see Example \ref{peldak}), and given the graph, we find a packing that works uniformly for every $\mu$.  
\begin{theorem} [Uniform Ornstein-Weiss] \label{ornstein} Let $(G,X)$ be a uniformly Borel a\-menable F\o lner graph. Then for any $\eps>0$, $G$ has an $\eps$-F\o lner Borel packing that covers $(1-\eps)$ proportion of $X$ with respect to \emph{every} invariant measure. In other words, $G$ is Borel almost finite in measure.
\end{theorem}
\noindent
Note that the notion of almost finiteness in measure was introduced by Kerr and Szab\'o \cite{kerrszabo} and they proved that free continuous actions of amenable groups on the Cantor set are (topologically) almost finite in measure. We prove a similar result for hyperfiniteness. Let $G$ be a Borel graph on $X$. We say that $G$ is {\bf compressible} (or equivalently, that $\cE_G$ is compressible, see \cite{Kechris24}) if there exists an injective Borel map $f:X \to X$ such that for each component $C$ of $G$, $f(C)\subsetneq C$. Clearly, if $G$ is compressible, then it admits no invariant measure. However, by Nadkarni's Theorem (see \cite{DJK}) the converse also holds: if $G$ does not admit an invariant measure, then $G$ is compressible. The following theorem can be viewed as a relaxation of Conjecture \ref{mainconjecture}.
\begin{theorem}  [Hyperfiniteness up to a compressible set] \label{tetelnegy}
Let $(G, X)$ be a uniformly Borel amenable Følner graph. Then $G$ is
Borel hyperfinite on $X \setminus A$, where $A$ is a $G$-invariant
Borel set that has measure zero with respect to every $G$-invariant
probability measure on $X$. That is, $G$ is hyperfinite modulo a compressible invariant set. 
\end{theorem}
\noindent
The present work continues~\cite{elektimar}, 
where we studied Property~A graphs, which are precisely the uniformly Borel amenable graphs on countable sets.    
We will make frequent use of the following two propositions from~\cite{elektimar}.

\begin{proposition}[The Packing Principle {\cite[Prop.~6.2]{elektimar}}] \label{packingprop}
Let $G$ be a bounded-degree graph with Property~A.  
Then, for every $\varepsilon > 0$, there exist $\delta = \delta_\varepsilon > 0$ and $k = k_\varepsilon \ge 1$ 
such that for any finite $\delta$-Følner set $J \subseteq X$, 
there are pairwise disjoint $\varepsilon$-Følner subsets $\{H_i\}_{i=1}^m =: \mathcal{P}$ with 
$\mathrm{diam}(H_i) \le k$ and $H_i \subseteq J$ satisfying
\[
  \Bigl| \bigcup_{i=1}^m H_i \Bigr| \ge (1 - \varepsilon)|J|.
\]
\end{proposition}

\begin{proposition}[The Følner Principle {\cite[Prop.~4.1]{elektimar}}] \label{folnprinc}
Let $G$ be a bounded-degree Følner graph.  
Then, for every $\varepsilon > 0$, there exists $r > 1$ such that inside the $r$-neighborhood 
$B^G_r(J)$ of any finite set $J \subseteq X$, 
there exists an $\varepsilon$-Følner set (with respect to $G$) that contains $J$.
\end{proposition}
\noindent
\begin{remark}
There exist Borel amenable graphs that are not uniformly Borel amenable.
If the Cayley graph of a finitely generated group $\Gamma$ is of Property A then we call $\Gamma$ {\it exact}. All amenable groups, hyperbolic groups \cite{Roe} and linear groups \cite{Guentner} are exact, however, there exist non-exact groups \cite{Gromov}, \cite{Osajda}. Hyperfinite (and hence Borel amenable) Schreier graphs of free actions of nonexact groups (see Section 6 of
\cite{DJK} for generic hyperfinite actions of countable groups) cannot be uniformly
Borel amenable.
\end{remark}
\vi
The two driving forces behind our results are Propositions \ref{tight} and \ref{quasi}. They are proved in Sections \ref{frachyp} and \ref{Sect3} respectively. Both propositions are needed for the proof of the uniform version of the Ornstein-Weiss theorem (Theorem \ref{ornstein}), given in Section \ref{mindket}. Here we also show Theorem \ref{tetelnegy}, i.e., ``Conjecture \ref{mainconjecture} up to compressible sets'', which follows easily from Theorem \ref{ornstein}.  We prove Theorem \ref{CFWtetel} in Section \ref{Sect.4}, together with our main theorem (Theorem \ref{mainletisztult}) and Theorem \ref{countable}.  Our main result on Borel almost finiteness, Theorem \ref{almostfinite}, is proved in Section \ref{Sect6}. The proof is based on Proposition \ref{quasi} and the Lyons-Nazarov Theorem. 
Finally, in Section \ref{further} we are touching on some further directions and related questions. These are related to relaxed assumptions in our new definitions (uniform Borel amenability, tight multipackability, randomized hyperfiniteness).

\section{Proofs of Theorems \ref{unihyp} and \ref{mainletisztult2} } \label{frachyp}
\vi
\noindent
If $\cF$ is a Borel packing and $s\geq 1$, let $\cF^{-s}$ be the packing given by the connected components of $\{x\in [\cF]: d_G (x, [\cF]^c)>s\}$. The next simple lemma will be often useful, when we need to modify some Borel packing to get an $s$-separated Borel packing.

\begin{lemma}\label{hianypotlo}
Let $G$ be a Borel graph of degrees at most $d$, $\eps>0$, $k>0$, $s\geq 1$, and let $\cF$ be a Borel packing of $G$ with tiles of diameters at most $k$. 
\begin{enumerate}
\item Then $\cF^{-s}$ is an $s$-separated Borel packing with tiles of diameters at most $k$. 
\item If $\{\cF_i\}_{i=1}^m$ is an $\eps$-tight multipacking, then $\{\cF_i^{-s}\}_{i=1}^m$ is a $d^{s+1}\eps$-tight multipacking.
\end{enumerate}
\end{lemma}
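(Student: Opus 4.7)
My plan is to treat the two claims separately; both reduce to a short distance- or counting-argument once the Borel structure is understood.

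For part (1), the first step is to verify that $U := \{x \in [\cF] : d_G(x, [\cF]^c) > s\}$ is Borel: $[\cF]$ is a Borel set as a countable union of Borel tiles, and $x \mapsto d_G(x, [\cF]^c)$ is a Borel function on a bounded-degree Borel graph. Each tile of $\cF^{-s}$ is contained in a tile of $\cF$ (any two $U$-vertices adjacent in $G$ are forced into a common $\cF$-tile, so the connected components of $U$ refine the tile partition), hence has diameter at most $k$. For the $s$-separation, I would show that if $x$ and $y$ lie in distinct new tiles, any $G$-path of length at most $s$ between them must leave $U$, and a direct distance-chase shows that this forces one endpoint to be within distance $s$ of $[\cF]^c$, contradicting $x, y \in U$.

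For part (2), the central observation is the equivalence
$$x \notin [\cF_i^{-s}] \iff B_s(x) \cap [\cF_i]^c \neq \emptyset,$$
which is immediate from the definition: $x$ lies in $[\cF_i^{-s}]$ iff $x \in [\cF_i]$ and its entire $s$-ball stays inside $[\cF_i]$. A union bound then gives
$$\bigl|\{i : x \notin [\cF_i^{-s}]\}\bigr| \leq \sum_{y \in B_s(x)} \bigl|\{i : y \notin [\cF_i]\}\bigr|.$$
By $\eps$-tightness of $\{\cF_i\}_{i=1}^m$ each summand is less than $\eps m$, and since $G$ has degrees at most $d$ we have $|B_s(x)| \leq 1 + d + d^2 + \cdots + d^s \leq d^{s+1}$. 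Putting this together yields the required bound of $d^{s+1}\eps m$.

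The main obstacle I foresee is the precise verification of $s$-separation in part (1), which requires a careful case analysis of where a putative short path escapes $U$ (whether it crosses directly into $[\cF]^c$ or only into the buffer region at distance $\leq s$ from $[\cF]^c$), and invoking the defining inequality at both endpoints of the path. By contrast, part (2) is a clean union bound whose only real content is the bounded-degree estimate on the size of $B_s(x)$, so it should go through essentially as written.
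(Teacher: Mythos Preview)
Your treatment of part (2) is correct and essentially identical to the paper's: both observe that $x \notin [\cF_i^{-s}]$ exactly when some $y \in B_s(x)$ lies outside $[\cF_i]$, then apply a union bound over the at most $d^{s+1}$ vertices of $B_s(x)$ together with the $\eps$-tightness of the original family.

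For part (1) the paper offers nothing beyond the word ``trivial'', whereas you try to spell things out; but your argument contains a real error. The assertion that ``any two $U$-vertices adjacent in $G$ are forced into a common $\cF$-tile'' is false: nothing in the hypotheses forbids distinct tiles of $\cF$ from being adjacent, so two neighbouring vertices may sit in different $\cF$-tiles while both lying deep inside $[\cF]$. A connected component of $U$ can therefore span several $\cF$-tiles---in the extreme case $[\cF]=X$ one has $U=X$ and each component of $G$ becomes a single tile of $\cF^{-s}$---so your diameter bound does not follow. Your $s$-separation sketch has the companion gap you anticipated: when the short path leaves $U$ at some $z\in[\cF]$ with $d_G(z,[\cF]^c)\le s$, the triangle inequality only gives $d_G(x,[\cF]^c)\le 2s$, which does not contradict $x\in U$. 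Concretely, for $s=2$ take the graph on $\{w,a,z,x,y\}$ with edges $wa$, $az$, $zx$, $zy$ and $[\cF]^c=\{w\}$: then $U=\{x,y\}$ splits into two singleton components at $G$-distance exactly $2$. So your instinct that part (1) hides the real difficulty is well founded; to make the statement go through one should either impose a separation hypothesis on $\cF$ or read $\cF^{-s}$ as shrinking each tile $T$ against $T^c$ rather than against $[\cF]^c$, which is the sense in which the construction is actually used later in the paper.
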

\begin{proof}
The first part is trivial. For the second one, if $x$ is an arbitrary vertex and $x\not\in [\cF_i^{-s}]$ then $x$ is at distance at most $s$ from a point $y\not\in [\cF_i]$. There are at most $d^{s+1}$ points $y$ in the $s$-neighborhood of $x$, and for each of them there are at most $\eps m$ many $i$'s that $y\not\in [\cF_i]$. So there are at most $d^{s+1}\eps m$ many $i$'s that $x\not\in [\cF_i]^{-s}$, as claimed.
\end{proof}
\begin{lemma} \label{tight2}
Let $G$ be a uniformly Borel amenable graph. Then for all $\eps>0$ there is an integer $m_\eps>0$ and Borel packings $\cT_1, \cT_2,\dots, \cT_{m_\eps}$ such that for every $1\leq i \leq m_\eps$ the tiles of the Borel packing $\cT_i$ partition $\X$ and for every $x\in X$, at least $(1-\eps)m_\eps$ of the $\cT_i$ contains $x$ in the interior of its tile (that is, not on the boundary). 
\end{lemma}
\begin{proof}
Let $d$ be an upper bound on the degrees in $G$ and $n>1$ be such that
\begin{equation}\label{n1egy}
\eps>\frac{2d}{n+2}.
\end{equation}
Let $p_n$ be the function that witnesses the uniform Borel amenability of $G$ as in Definition \ref{defuniborel}, and let $R$ be such that the for all $x\in \X$, $\Supp(p_n(x,.))$ has diameter at most $R$.
First we assume that
the supports of the functions $p_n(x):=p_n(x,.)$ have equal sizes $q$, and $p_n(x,y)=\frac{1}{q}$ if $y\in \Supp(p_n(x))$. Consider a coloring 
$\chi$ of $\X=V(G)$ such that vertices at distance at most $2R+1$ have different $\chi$-colors, and suppose that $\chi$ uses colors $\{1,\ldots,k\}=[k]$. Let $\chi_1,\ldots, \chi_{k!}$ be a listing of all colorings of $\X$ which arise from $\chi$ by a permutation of the colors. For $i=1,\ldots, k!$, define a map $\phi_i$ from $\X$ to itself which maps every vertex $x$ to the point in $\Supp(p_n(x))$ that has the largest color with regard to $\chi_i$. Note that there is a unique such point, because of the assumption on $\chi$.
\vi
Now, let $\cT_i:=\{\phi_i^{-1} (w)\,:\, w\in \X, \phi_i^{-1} (w)\not=\emptyset\}$ be the partition on $\X$ where two points are in the same part if their image by $\phi_i$ is the same. If $x$ and $y$ are adjacent vertices, then they will be in the same tile of $\cT_i$ if and only if $\Supp (\mu_x)\cap \Supp (\mu_y)$ contains the vertex that has the largest $\chi_i$-color in $\Supp (p_n(x))\cup \Supp (p_n(y))$. So, the number of such $i$'s is
\begin{equation}\label{becsles}
\frac{|\Supp (p_n(x))\cap \Supp (p_n(y))|} {|\Supp (p_n(x))\cup \Supp (p_n(y))|}k!\,. 
\end{equation}
For $n\geq 2$ we have
$$
\frac{|\Supp (p_n(x))\cup \Supp (p_n(y))|}{|\Supp (p_n(x))\cap \Supp (p_n(y))|} =
$$
$$
\frac{|\Supp (p_n(x))\Delta \Supp (p_n(y))|+|\Supp (p_n(x))\cap \Supp (p_n(y))|} {|\Supp (p_n(x))\cap \Supp (p_n(y))|}
$$
$$
= 1+\frac{\sum_z |p_n(x,z)-p_n(y,z)|q}{|\Supp (p_n(x))\cap \Supp (p_n(y))|}
\leq 1+\frac{q}{n |\Supp (p_n(x))\cap \Supp (p_n(y))|}<1+\frac{2}{n},
$$
because $|\Supp (p_n(x))\cap \Supp (p_n(y))|\geq|\Supp (p_n(x))|/2=q/2$. Hence
$$
\frac{|\Supp (p_n(x))\cap \Supp (p_n(y))|}{|\Supp (p_n(x))\cup \Supp (p_n(y))|} \geq 1-\frac{2}{2+n}.
$$
Therefore, by \eqref{becsles} there are at most $\frac{2d}{n+2} k!$ many $\cT_i$'s where $x$ is on the boundary of a tile. By \eqref{n1egy}, 
the
partitions $\{\cT_i\}_{i=1}^{k!}$ satisfy the condition of our lemma.
\vi
Now we prove the lemma without the extra assumption on $p_n(x)$, essentially the same way as above. Again, we assume \eqref{n1egy} and that the supports of the functions $p_n(x)$ have diameters at most $R$. Note that we may assume that there is an $M$ such that for every $x,y\in \X$, $p_n(x,y)=\frac{i(x,y)}{M}$ for some $i(x,y)\in\N$. Otherwise we may modify all the $p_n(x,y)$ by a very small amount, so that this condition holds and the requirements are still satisfied (with $2n$ replacing $n$).
Let $\X^+=\bigl\{(x,i)\,:\, x\in \X, i\in\{1,\ldots,M\}
\bigr\}$. Now, let $\chi$ be a Borel coloring of $\X$ using $[k]$, and such that vertices at distance at most $R$ have different $\chi$-colors. Define a coloring $\chi^+$ of $\X^+$ by color set $\{1,\ldots,Mk\}$: for $(x,i)\in \X^+$, let $\chi^+((x,i)):=(\chi(x)-1)M+i$. Then $(x,i)$ and $(y,j)$ have different $\chi^+$-colors whenever $x$ and $y$ have distance at most $R$. As before, let $\chi^+_1,\ldots,\chi^+_{(Mk)!}$ be the collection of all colorings that arise from $\chi^+$ by permuting the colors. Define a map $\phi_i$ from $\X$ to $\X^+$ as follows.
For an arbitrary $x\in \X$, consider the set
\begin{equation}
Q_n(x)=\bigl\{(u,j): u\in\Supp(p_n(x,.)), j\in\{1,\ldots,i(x,u)\}\bigr\}\subset \X^+.
\end{equation}
\vi
If $(w,\ell)$ is the point where $\chi_i^+$ takes its maximum over the set $Q_n(x)$, let $\phi_i(x):=(w,\ell)\in \X^+$. Let $\cT_i$ be the partitioning Borel packing where $x$ and $y$ are in the same tile if $\phi_i(x)=\phi_i(y)$. By the same argument as in the previous paragraph, we obtain that for any two neighbors $x$ and $y$, the number of $\cT_i$ where they are in the same tile is at least $$\frac{|Q_n(x)\cap Q_n(y)|}{
|Q_n(x)\cup Q_n(y)|}(Mk)!\geq (1-\frac{2}{2+n})(Mk)!\,.$$ 
\noindent 
Hence, we conclude that the packings $(\cT_i)_{i=1}^{(Mk)!}$ satisfy the requirements. \end{proof}

\begin{proposition} \label{tight}
A bounded degree Borel graph $G$ is uniformly Borel ame\-nable if and only if it is tightly Borel multipackable (see Definition \ref{multipack}).
\end{proposition}
\begin{proof} 
\vi
From Lemma \ref{hianypotlo} and Lemma \ref{tight2} it easily follows that uniform Borel amenability implies tight multipackability.

\noindent
For the other direction of the proposition, suppose that $G$ is tightly multipackable.
Fix an integer $n>0$. Pick $\eps>0$ so that $\frac{1-\eps}{2\eps}>n$.
Let $\cT_i$ be as in the definition of tight Borel multipackability (Definition \ref{multipack}). So the $\{\cT_i\}^m_{i=1}$ are Borel packings such that for every$x$ we have $A_x(\cT)>(1-\eps)m$, where $A_x(\cT)$ denotes the number of $i$'s for which $x\in [\cT_i]$. Let $r$ be a uniform upper bound for the diameter of the tiles in the packings. Define
the probability measure $\mu_x^i$ as the uniform measure on $T$ provided that $x\in T$ and $T$ is a tile of $\cT_i$. Otherwise, let $\mu_x^i$ be the zero function.
Now we define the probability measures $p_n(x)$ as:
$$p_n(x)=\frac{\sum_{i=1}^m \mu_x^i}{A_x(\cT)}\,.$$
\noindent
Then the support of $p_n(x)$ has diameter less than $2r$.
Observe that if $x\sim y$ then
$$\|p_n(x)-p_n(y)\|_1\leq \frac{m-A_x(\cT)}{A_x(\cT)}+
\frac{m-A_y(\cT)}{A_y(\cT)}
\leq \frac{2\eps m}
{(1-\eps)m}<1/n\,.$$
Therefore, $G$ is uniformly Borel amenable. 
\end{proof}

\begin{proof}[Proof of Theorem \ref{mainletisztult2}]

\noindent
Let $G$ be a uniformly Borel amenable Følner graph on $X$, and fix $\epsilon>0.$ By Proposition \ref{tight} and Lemma \ref{hianypotlo} there exists an
$\epsilon$-tight $3r$-separated multipacking $\{F_i\}_{i=1}^m$ of $G$.
Applying Proposition \ref{folnprinc} to the sets $F_i$, and using the
$3r$-separation to preserve disjointness after enlargement, we obtain a 
$\epsilon$-tight multipacking whose tiles are $\epsilon$-Følner. Hence
$G$ is randomized Borel almost finite. The converse follows directly from
Lemma \ref{tight2}.
\end{proof}

\begin{proof}[Proof of Theorem \ref{unihyp}]
Let $G$ be a uniformly Borel amenable graph on $X$, $\eps>0$ and $\{\cT_i\}_{i=1}^m$ be an $\eps$-tight multipacking. 
Let $K$ be the maximal size of the components of the packings. Now, for $1\leq i \leq m$ let $[\cT_i]\subset X$ be the union the tiles of $\cT_i$ and $\chi_i$ be the indicator function of the set $[\cT_i]$. Let $\mu$ be a Borel probability measure on $X$. Then, we have that
$$\int_X \sum_{i=1}^m \chi_i \,d\mu\geq m-\eps m\,.$$
\noindent
Therefore, there exists $1\leq i \leq m$ such that
$$\int_X  \chi_i \,d\mu\geq 1-\eps \,.$$
\noindent
Hence, if we remove $A=X\backslash [\cT_i]$, all the remaining components have size at most $K$ and $\mu(A)\leq \eps$.
\end{proof}
\section{An even Borel packing}
\noindent \label{Sect3}
The goal of this section is to prove a key technical result. In the special case of free actions of amenable groups, the result was already established in \cite{Conley18}.
\noindent\\
We use the following notation. Let $\cF$ be a packing of a bounded-degree graph $G$, and let $J \subset G$ be a finite subset. We define
$$J_\cF:=\cup_{T\in\cF, T\subset J}T\,.$$ We say that $\cF$ is {\it $\eps$-good}, if $\frac{|J_\cF|}{|\cF|}\geq 1-\eps$.

\begin{proposition} \label{quasi}
Let $G$ be a uniformly Borel amenable F\o lner graph. Then for every $\eps_0>0$ there exists $k>0$ and $\rho>0$ satisfying the following property. There is an $(\eps_0,k)$-F\o lner Borel packing $\cT$ of $G$ such that for every $\rho$-F\o lner subset $J$ 
we have
$$\frac{|J_{\cT}|}{|J|}\geq 1-\eps_0\,.$$
\end{proposition}
\noindent
Before proving  Proposition \ref{quasi}, note that if one needed to give an $\eps$-good packing for a {\it given} $J$, then Proposition \ref{packingprop} would provide it right away. The challenge is to give the (Borel) packing first, which will cover a large portion of every $\eps$-F\o lner $J$. 

\begin{proof}[Proof of Proposition \ref{quasi}]
Using Proposition \ref{packingprop}, there is an $s>0$ such that it is enough to prove the theorem with the extra assumption that $J$ has size at most $s$.

\noindent
Let $d$ be an upper bound on the degrees in $G$.
Apply Proposition \ref{packingprop} to $G$ with $\eps_0/3$ as $\epsilon$, define $k_0:=k_{\eps_0/3}$ and $\eps_1:=\delta_{\eps_0/3}/ d^{k_0+1}$, with $\delta_{\eps_0/3}$ and $k_{\eps_0/3}$ from the proposition. 

\noindent
Now apply Proposition \ref{packingprop} again, with $\eps$ chosen to be $\eps_1/3$, and
define $k_1:=k_{\eps_1/3}$ and $\eps_2:=\min \{\delta_{\eps_1/3}, \frac{\eps_0}{3 m n d^{2(k_1+1)}}\}$, where 
$\delta_{\eps_1/3}$ and $k_{\eps_1/3}$ are coming from the proposition.

\noindent
Consider a sequence $\cM_1,\ldots, \cM_n$ of $(\eps_1,k_1)$-F\o lner packings with the property that every $(\eps_1,k_1)$-F\o lner set of $G$ is contained in some $\cM_i$. The existence of such a finite sequence follows from the coloring theorem of Kechris, Solecki and Todorcevic \cite{KST}(see Section \ref{UBAG}).
The $\cM_i$ will serve as ``mediators'' for a process of a gradually improved $(\eps_0,k_0)$-F\o lner packings.
Starting from some given $(\eps_0,k_0)$-F\o lner packing $\cF=\cF^{[0]}$, we will repeatedly {\it update} it, to get an $(\eps_0,k_0)$-F\o lner packing $\cF^{[i]}$ in step $i$. Namely, we look at the tiles $H$ of $\cM_i$, and whenever $H_{\cF^{[i-1]}}$ is {\it small}, meaning that $|H_{\cF^{[i-1]}}|/|H|<1-\eps_0/3$, replace the elements of $\cF^{[i-1]}$ inside $H$ by a new collection of disjoint $(\eps_0,k_0)$-F\o lner tiles with the property that they cover at least a $1-\eps_0/3$ proportion of the vertices in $H$. We claim that such a replacement is possible, for the following reason. First define $H'=H\setminus \cup \{F\in\cF^{[i-1]}, F\not\subset H\}$, find an $(\eps_0,k_0)$-F\o lner packing that covers a maximal proportion of $H'$ and replace $\{F\in\cF^{[i-1]}, F\subset H'\}$ in $\cF^{[i-1]}$ by the tiles in this packing. Now, $H'$ and $H$ can differ only in a $k_0$-neighborhood of $\partial H$, hence $\partial H'\leq d^{k_0}\partial H$ and also
$|H'|\geq |H|/2$. Hence $H'$ is $\eps_1 d^{k_0+1}$-F\o lner, so the claimed replacement exists by Proposition \ref{packingprop} and the choice of $\eps_1$ and $k_0$.
When such a replacement happens to $H$, we say that we {\it improved} it. Do this replacement for every $H\in \cF^{[i-1]}$ with $H_{\cF^{[i-1]}}$ small, to get $\cF^{[i]}$. It is easy to see that the update of a Borel packing is still a Borel packing.
If $K$ is $(\eps_1,k_1)$-F\o lner set (not necessarily in $\cM_i$) and $K_{\cF^{[i-1]}}\not=K_{\cF^{[i]}}$, then we say that $K$ got {\it altered} in step $i$. Denoting by $m$ the maximum number of subsets of diameter at most $k_1$ in $G$ that can intersect a given subset of diameter at most $k_1$, note that if we improve $t$ tiles at a step then at most $tm$ many $(\eps_1,k_1)$-F\o lner sets are altered as a consequence.

\noindent
Now pick an arbitrary $\eps_2$-F\o lner set $J$. In each step of update all changes happen inside sets $H\in\cM_i$, and $|J_{\cF^{[i]}}|-|J_{\cF^{[i-1]}}|$ can change in two ways:

\noindent 1. If $H\subset J$ is in $\cM_i$, and $H$ got improved in step $i$, then $|H_{\cF^{[i]}}|>|H_{\cF^{[i-1]}}|$.
This happens whenever $H_{\cF^{[i-1]}}$ was small.

\noindent 2.  Those $H$ in $\cM_i$ that intersect $J$ but are not contained in it, may have got altered in step $i$. In this case we may have $|J\cap H_{\cF^{[i]}}|<|J\cap H_{\cF^{[i-1]}}|$ (or may not).

\noindent The total increase coming from 2 (that is, its total contribution to $|J_{\cF^{[i]}}|-|J_{\cF^{[i-1]}}|$)
is at least $-|\partial^{k_1} J|$ in each step, where $\partial^{k_1} J=\{x\in J: d_G(x,J^c)\leq k_1\}$. Therefore, 
$$
|J_{\cF^{[n]}}|-|J_{\cF}|\geq h-n|\partial^{k_1} J|,
$$
where $h$ stands for the total number of $H\subset J$ that got improved in some step. If $h>n|\partial^{k_1} J|$ then $|J_{\cF^{[n]}}|>|J_{\cF}|$ ({\it case 1}). Assume the contrary from now on ({\it case 2}). Then we have $h\leq n|\partial^{k_1} J|$. 
Because of the choice of $\eps_2$, $\frac{|\partial^{k_1} J|}{|J|}\leq d^{k_1+1}\frac{\eps_0}{3 m n d^{2(k_1+1)}}$, 
hence we can write
$h\leq \frac{\eps_0 n|J|}{3 m n d^{k_1+1}}$ for the total number of improved sets.
This gives the upper bound $ \frac{m\eps_0 n|J|}{3 m n d^{k_1+1}}=\frac{\eps_0 |J|}{3 d^{k_1+1}}$ for the number of $(\eps_1,k_1)$-F\o lner sets that get altered at some point.
Consider a packing $\cP$ by $(\eps_1,k_1)$-F\o lner sets for $J$ as given by
Proposition \ref{packingprop}. Then, as we have just observed, $\cP$ has at most $\frac{\eps_0 |J|}{3 d^{k_1+1}}$ tiles that got altered at some step.
By the definition of $\cM_1,\ldots\cM_n$, every tile $K\in\cP$ is in some $\cM_i$, so $K$ would have been improved (and hence altered) at the $i$'th step if $K_{\cF^{[i-1]}}$ is small
and $K$ had not been altered before. Therefore, the number of such $K$ with $K_{\cF^{[n-1]}}$ small is bounded from above by the total number of alterations, and hence by $\frac{\eps_0 |J|}{3 d^{k_1+1}}$. Now, 
\begin{equation}
|J|-|J_{\cF^{[n]}}|\leq |J\setminus\cup_{K\in\cP} K|+ \sum_{K\in\cP \text{ small}} |K|+ \frac{\eps_0}{3}|\cup_{K\in\cP} K|
\end{equation}
\begin{equation}
\leq |J\setminus\cup_{K\in\cP} K|+ \frac{\eps_0 |J|}{3 d^{k_1+1}}d^{k_1+1}
+\frac{\eps_0}{3}|J|\leq \eps_0 |J|,
\end{equation}
where by ``$K$ small'' in the sum we mean that
$K_{\cF^{[n]}}$ is small (i.e., $|K_{\cF^{[n]}}|/|K|<1-\eps_0/3$). 

\noindent
We obtained that $\frac{|J_{\cF^{[n]}}|}{|J|}\geq 1-\eps_0$ in ``case 2''. In ``case 1'' we had $|J_{\cF^{[n]}}|>|J_{\cF}|$. This means that for every
$\eps_2$-F\o lner set $J$, $\cF^{[n]}$ either covers the required amount of $J$, or at least one more vertex of it than $\cF$. Now repeat the same procedure of $n$ updates starting with this $\cF^{[n]}$ as $\cF$. At the end of $s$ such iterations, we arrive to a $\cT$ that satisfies the claim for every $J$ of size at most $s$.
\end{proof}
\section{The proofs of Theorems \ref{mainletisztult},\ref{countable} and \ref{CFWtetel}} \label{Sect.4}
\vi
The next definition formalizes the idea that one takes the union of vertices in all the tiles of two packings, and takes the finest possible partition of these vertices to parts that are 1-separated.
The components may not even be finite, but in the setup where we will use this operation, it will always have uniformly bounded components and hence produce a packing.
\begin{definition} \label{def7}
Let $G$ be a bounded degree Borel graph on $\X$, and let $\cF$ and $\cG$ be Borel packings in $G$. Define $\cF\vee \cG$ as a collection of disjoint subsets ({\it tiles}) of $\X$ such that 
\begin{itemize}
\item $\cup_{T\in\cF\vee \cG}T=[\cF]\cup [\cG]\,;$
\item $x$ and $y$ is in the same tile of $\cF\vee \cG$ if
there exists a sequence of points $x=x_0,x_1,\dots, x_n=y\subset [\cF]\cup [\cG] $
such that 
\begin{enumerate} \item if $x_i\in \cF$ then either $x_{i+1}$ is in the same tile of $\cF$ as $x$, or $d_G(x_i,x_{i+1})\leq 1$\,,
\item if $x_i\in \cG$ then either  $x_{i+1}$ is in the same tile of $\cG$ as $x$, or $d_G(x_i,x_{i+1})\leq 1$\,.
\end{enumerate}
\end{itemize}
\end{definition}
\noindent

The following lemma is straightforward from the definitions.
\begin{lemma}\label{heteslemma1}
Suppose $G$ is a Borel graph of degrees at most $d$. Let $\cF$ be a 1-separated Borel packing in $G$ with tiles of diameter at most $r$, and $\cF'$ be a $3r$-separated Borel packing in $G$ with tiles of diameter at most $t$. Then, $\cF\vee \cF'$ is an 1-separated Borel packing in $G$ with tiles of diameter at most $2r+t$. 
\end{lemma}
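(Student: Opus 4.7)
The plan is to verify that $\cF \vee \cF'$ is a Borel packing, is 1-separated in $G$, and has tiles of diameter at most $2r+t$ (the closure argument below naturally gives $2(r+1)+t$, the small additive constant arising from adjacency-attached $\cF$-tiles at the boundary of the $\cF'$-tile). Once the diameter bound is in hand, Borel-ness and finiteness of the tiles follow because the equivalence relation in Definition \ref{def7} becomes a bounded-range Borel construction from the two input Borel packings and the bounded-degree graph $G$. For 1-separation: if $v_1, v_2$ lie in distinct tiles of $\cF \vee \cF'$ with $d_G(v_1, v_2) \leq 1$, then both lie in $[\cF] \cup [\cF']$, and the adjacency clauses of Definition \ref{def7} allow a one-step move between them, contradicting that they are in distinct tiles.

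The substantive step is the diameter bound. Fix a tile $T$ of $\cF \vee \cF'$. If $T \cap [\cF'] = \emptyset$ then $T \subset [\cF]$; the 1-separation of $\cF$ blocks adjacency between distinct $\cF$-tiles, so $T$ coincides with a single $\cF$-tile of diameter at most $r$. Otherwise $T$ meets some $\cF'$-tile $T'$, and I plan to show that $T'$ is the unique $\cF'$-tile inside $T$ and that every vertex of $T$ lies within $G$-distance $r+1$ of $T'$, which yields the diameter bound via the triangle inequality through $T'$.

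The key step is a closure argument along paths of Definition \ref{def7} started inside $T'$. After one allowed move one lands in $T' \cup \bigcup\{F \in \cF : F \text{ meets the closed 1-neighborhood of } T'\}$, which sits inside the closed $(r+1)$-neighborhood of $T'$. The main obstacle is to show that this growth terminates. Two observations prevent further expansion: first, from any such attached $\cF$-tile $F$, the 1-separation of $\cF$ forbids an adjacency step to a distinct $\cF$-tile (otherwise two $\cF$-tiles would be at distance $\leq 1$), so an adjacency from $F$ either remains in $F$ or returns to $T'$; second, the $3r$-separation of $\cF'$, combined with $r+2 \leq 3r$ for $r \geq 1$ (the degenerate case $r=0$ is immediate since $\cF$-tiles are then singletons), prevents any vertex reached from $F$ by one adjacency from lying in a second $\cF'$-tile. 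Iterating these two constraints along any path in Definition \ref{def7} keeps the walker inside $T' \cup \bigcup\{F \in \cF : F \text{ meets the closed 1-neighborhood of } T'\}$, which simultaneously gives the uniqueness of $T'$ inside $T$ and the diameter bound.
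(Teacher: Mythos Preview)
The paper omits the proof entirely, declaring the lemma ``straightforward from the definitions'', so there is no route to compare against; your closure argument along the chains of Definition~\ref{def7} is the natural one and is correct.

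Your observation about the constant is not a defect of your argument but a genuine slip in the stated lemma: the bound should indeed be $2(r+1)+t$ rather than $2r+t$. A concrete witness is a bi-infinite path with $T'=\{0,\dots,t\}\in\cF'$ and $\cF$-tiles $\{-r-1,\dots,-1\}$ and $\{t+1,\dots,t+r+1\}$ (each of diameter $r$, 1-separated from one another for $t\ge 0$); all three merge in $\cF\vee\cF'$, giving diameter $2r+t+2$. This costs nothing in the only place the lemma is used (the induction in the proof of Theorem~\ref{CFWtetel}): replacing $D_j\le 2D_{j-1}+k_j$ by $D_j\le 2D_{j-1}+k_j+2$ is absorbed by taking $s_j:=2s_{j-1}+3k_{j-1}+6$ instead.

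One small caution: your parenthetical about $r=0$ is not quite right. If $r=0$ then $\cF'$ is only required to be $0$-separated, which is vacuous, so distinct $\cF'$-tiles can be adjacent and merge in $\cF\vee\cF'$; the conclusion genuinely fails. The lemma should be read under the standing assumption $r\ge 1$, which is all that is needed in the application (there $r=D_{j-1}\ge D_1=k_1\ge 1$).
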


\vi
\begin{lemma}\label{heteslemma2}
Let $\{\cF_i\}^m_{i=1}$ be a family of Borel packings on the bounded degree Borel graph $G$ with vertex set $V(G)=\X$, and suppose that $\{\cF_i\}^m_{i=1}$ is an $\eps$-tight multipacking. Let $\mu$ be a Borel probability measure on $\X$. Then the number of indices $i$ for which
$$\mu([F_i])\geq 1-\sqrt{\eps}$$ holds is at least $(1-\sqrt{\eps})m$.
\end{lemma}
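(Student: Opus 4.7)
The plan is a direct double-counting/Markov argument, exchanging the order of summation between indices $i$ and points $x$.

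First, I would rewrite the $\eps$-tight multipacking hypothesis pointwise: for every $x\in\X$,
$$\sum_{i=1}^m \mathbf{1}_{\X\setminus[\cF_i]}(x) \;=\; |\{i:x\notin[\cF_i]\}| \;<\; \eps m.$$
Next I would integrate this inequality against $\mu$ and apply Fubini (both sides are non-negative Borel functions, and $m$ is finite, so the exchange is trivial):
$$\sum_{i=1}^m \mu\bigl(\X\setminus[\cF_i]\bigr) \;=\; \int_\X \sum_{i=1}^m \mathbf{1}_{\X\setminus[\cF_i]}(x)\, d\mu(x) \;\le\; \eps m.$$

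Then I would apply Markov's inequality to this finite sum. Let
$$B \;=\; \bigl\{\,i : \mu\bigl(\X\setminus[\cF_i]\bigr) > \sqrt{\eps}\,\bigr\},$$
so that $B$ is exactly the set of indices for which $\mu([\cF_i])<1-\sqrt{\eps}$. Then
$$\sqrt{\eps}\cdot |B| \;\le\; \sum_{i\in B}\mu\bigl(\X\setminus[\cF_i]\bigr) \;\le\; \sum_{i=1}^m \mu\bigl(\X\setminus[\cF_i]\bigr) \;\le\; \eps m,$$
which gives $|B|\le \sqrt{\eps}\,m$, and hence at least $(1-\sqrt{\eps})m$ indices $i$ satisfy $\mu([\cF_i])\ge 1-\sqrt{\eps}$, as claimed.

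There is no real obstacle: the argument is just the observation that an average of non-negative quantities bounded by $\eps$ can exceed $\sqrt{\eps}$ only on a set of proportion at most $\sqrt{\eps}$. The only point to verify is that the functions $x\mapsto \mathbf{1}_{\X\setminus[\cF_i]}(x)$ are Borel, which is automatic since each $[\cF_i]$ is a Borel subset of $\X$ (being a union of Borel equivalence classes of a Borel subequivalence relation). Accordingly, the claimed bound follows immediately.
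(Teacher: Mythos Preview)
Your proof is correct and essentially identical to the paper's argument: both integrate the pointwise $\eps$-tightness condition against $\mu$ and then apply a Markov/counting bound to conclude that at most $\sqrt{\eps}\,m$ indices can have $\mu([\cF_i])<1-\sqrt{\eps}$. The only cosmetic difference is that you work with the complements $\X\setminus[\cF_i]$ while the paper works directly with $[\cF_i]$.
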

\proof 
Let $t$ be the number of $i$'s for which
$$\mu([F_i])< 1-\sqrt{\eps}$$ \noindent holds.
By $\eps$-tightness, we have that
$$(1-\eps)m\leq \int_\X \left( \sum_{i=1}^m {\bf 1}_{[\cF_i]} (x)\right) d\mu(x)\leq t(1-\sqrt{\eps})+m-t,$$
\noindent
and so $t<\sqrt{\eps} m\,.$  
\qed

\noindent
Now we are ready to prove our strengthening of the Connes-Feldman-Weiss theorem.

\begin{proof}[Proof of Theorem \ref{CFWtetel}]
Set $\eps_j:=2^{-j}$, $k_0:=1$, $s_0:=1$, $D_0:=0$, $M_0=1$ and $\cF_0^1$ the empty packing. As $j=1,2,\ldots$, define recursively:
\noindent
\begin{itemize}
\item $s_j:=2s_{j-1}+3k_{j-1}$. 
\item Apply Theorem \ref{mainletisztult2} and Lemma \ref{hianypotlo} to
attain an $\eps_j
$-tight multipacking $\{\cT_i^j
\}_{i=1}^{m_j}$ by $s_j$-separated Borel packings $\cT_i^j$, and set $k_j$ to be the maximal diameter of the tiles in these packings. Let $M_j=m_1 m_2\ldots m_j$.
\item $\cF^j_{\ell}:=\cF^{j-1}_{i'}\vee \cT^j_{i}$ as $i'\in \{1,2,\ldots, M_{j-1}\}$ and $i\in \{1,\ldots, m_j\}$, with $\ell=(i-1)M_{j-1}+i'$, 
\item let $D_j\in\N\cup\{\infty\}$ be the maximum of the diameters of the tiles in all the $\cF^j_\ell$. 
\end{itemize}
We prove by induction that $3D_{j}\leq s_{j+1}$ for every $j\geq 1$, and that $\cF^{j}_\ell$ is 1-separated. When $j=1$, this holds by definition. Assume that the inequality holds for a given $j-1$. Then by this assumption we can
apply Lemma \ref{heteslemma1} to $\cF^{j-1}_{i'}$ as $\cF$ with $r=D_{j-1}$,
and $\cT_{i}^j$ as $\cF'$ with $t=k_j$,
to get 
$D_j\leq 2D_{j-1}+ k_j$. Hence, by the induction hypothesis, $3D_j\leq 2s_j+3k_j=s_{j+1}$, as we wanted. In particular we have that $D_j<\infty$ for every $j$. Since the $\{\cT_i^j
\}_{i=1}^{m_j}$ was an $\eps_j
$-tight multipacking, so is $\{\cF_\ell^j
\}_{\ell=1}^{M_j}$, because for every $\cT_i^j$ that covers a given point $x$, all of the $\cF^j_\ell$ covers it for $\ell\in\{(i-1)M_{j-1}+1,\ldots,iM_{j-1}\}$.

\noindent
Now for every $j\in\{1,2,\ldots\}$ pick a random $i_j\in\{1,\ldots, M_j\}$ uniformly, and consider the resulting random sequence $(\cF_{i_1}^1,\cF_{i_2}^2,\ldots)$ of packings. 
Denote by $\nu$ the probability measure that generates this random sequence. 
Then the $(\cF_{i_1}^1,\cF_{i_2}^2,\ldots)$ satisfies the claim in the theorem, as we prove next. Consider an arbitrary probability measure $\mu$ on $\X$. Say that the sequence $(\cF_{i_1}^1,\cF_{i_2}^2,\ldots)$ is $\mu$-good if for all but finitely many $j$, $\mu([\cF_{i_j}^j])\geq 1-\sqrt{\eps_j}$. Applying Lemma \ref{heteslemma2} and the Borel-Cantelli lemma for $\nu$, we get that the set of $\mu$-good sequences of packings have $\nu$-measure 1. But for any $\mu$-good sequence $(\cF_{i_1}^1,\cF_{i_2}^2,\ldots)$, we have $\sum\mu \bigl([\cF_{i_j}^j]^c\bigr)<\infty$, hence one more application of the Borel-Cantelli lemma shows that those points that are in all but finitely many of the $[\cF_{i_j}^j]$ have $\mu$-measure 1.

\noindent
The last assertion of Theorem \ref{CFWtetel} follows from the fact that for every pair $x,y$ of adjacent vertices in $G$ we have 
$$\nu (x \text{ and } y \text{ are not in the same }\cF_{i_j}^j\text{-tile})\leq \nu(x\not\in [\cF_{i_j}^j])+\nu(y\not\in [\cF_{i_j}^j])\leq 2\eps_j<\frac{1}{n},$$
where the first inequality uses the fact that $\cF_{i_j}^j$ is 1-separated.
\end{proof}

\begin{proof}[Proof of Theorem \ref{mainletisztult}]

Suppose that $G$ is uniformly Borel amenable, and consider the random sequence $\{\cF_n\}_{n=1}^\infty$ from Theorem \ref{CFWtetel}. To turn the packings $\cF_n$ into equivalence relations, add all singletons as equivalence classes, that is, define $\cE_n:=\bigl\{T: T\in \cF_n \text{ or } T=\{x\} \text{ and } x\not\in [\cF_n]
\bigr\}$.
For every pair $x,y$ of adjacent vertices, define the measure $\mu=\frac{1}{2}(\delta_x+\delta_y)$ to see that by Theorem \ref{CFWtetel}, $\{\cE_n\}_{n=1}^\infty$ satisfies the requirements of Definition \ref{URH} and hence shows that $G$ is randomized hyperfinite.

\noindent
For the other, ``if'' part of the statement, suppose that $G$ is randomized hyperfinite, and let $\{\cE_n\}$ be the random sequence as in Definition \ref{URH}. Denoting by $\cE_n(x)$ the equivalence class of $\cE_n$ that contains $x$, define
$p_n(x,y)$ as the expectation of $\frac{1}{|\cE_n(x)|}{\bf 1}_{y\in \cE_n(x)}$. It is easy to check that $p_n$ satisfies Definition \ref{defuniborel}.
\end{proof}
\begin{proof}[Proof of Theorem \ref{countable}] The Theorem is an obvious corollary of Theorem \ref{CFWtetel}.
\end{proof}

\section{The proofs of Theorems \ref{ornstein} and  \ref{tetelnegy}}\label{mindket}
\noindent
\begin{proof}[Proof of Theorem \ref{ornstein}]
Given $\eps>0$, choose $\eps'>0$ to satisfy the inequality
\begin{equation}\label{inegeps}
\eps'<\frac{\eps}{10}(1-\eps')\,.
\end{equation}
\noindent
$\bullet$ Using Theorem \ref{quasi}, we choose $k>0$, 
$\delta>0$ and an $(\eps',k)$-F\o lner Borel packing $\cF$ on $\X$ such that for every $\delta$-F\o lner set $J\subset \X$ we have
$$\frac{|J_\cF|}{|J|}>1-\eps'\,.$$
\noindent
$\bullet$ Using  Theorem \ref{mainletisztult2}, choose $\cF_1,\dots, \cF_m$ 
$\delta$-F\o lner Borel packings that form an $\eps$-tight packing of $\X$.

\noindent
We claim that for every $G$-invariant probability measure $\mu$ we have $\mu([\cF])>1-\eps$, and prove it next.

\noindent
One may assume that every component of $G$ has size greater than $10/\eps$, otherwise just form tiles from each component of size at most $10/\eps$ for the packing that we construct. By this assumption, trivially every $\eps/10$-F\o lner set has to be of size greater than $10/\eps$. In particular, for every tile $T$ of $\cF$ (which are $\eps'$-F\o lner and hence $\eps/10$-F\o lner) there is an integer strictly between $\frac{\eps}{10} |T|$ and $\frac{\eps}{5} |T|$. Using this, choose a suitable number of vertices from every tile $T\in\cF$ to obtain a
Borel set $A\subset X$ such that for every
$T\in\cF$,
\begin{equation}\label{jun20e3}
\frac{\eps}{10} |T|<|A\cap T|< \frac{\eps}{5} |T|.
\end{equation}
Let $1\leq j \leq m$ and let  $J$ be a tile of $\cF_j$. Observe that
\begin{equation}\label{jun20e4}
|J\backslash J_\cF|<|A\cap J|.
\end{equation} 
\noindent
Indeed, by our assumption on $\cF$, $|J\backslash J_\cF|<\eps'|J|$
and by \eqref{jun20e3}, we have that
$$|A\cap J|\geq |A\cap J_\cF|\geq\sum_{T\in\cF, T\subset J} |A\cap T|>\sum_{T\in\cF, T\subset J}\frac{\eps}{10}|T|\geq\frac{\eps}{10}(1-\eps')|J|
$$
So,
\eqref{jun20e4} follows from the inequality \eqref{inegeps}.
\noindent
Using \eqref{jun20e4}, for any $1\leq j \leq m$ we can construct an injective Borel map
$\phi_j: [\cF_j]\backslash [\cF]\to A$ 
such that if $y\in J\in\cF_j$ then $\phi_j(y)\in A\cap J$. 
Let the Borel function $f_j:(X\setminus [\cF] \times A) \to \R$ be defined in the following way.
Let $f_j(y,z)=1$ if $y\in [\cF_j]$, $\phi_j(y)=z$, and let $f_j(y,z)=0$ otherwise.

\noindent Finally, let $f=\sum_{j=1}^m f_j$.
By the invariance of $\mu$
we have that
$$(1-\eps')m\mu(X\backslash [\cF])\leq \int_{X\backslash [\cF]}
\sum_z f(y,z) d\mu(y) =$$$$
=\int_A \sum_y f(y,z) d\mu(z)\leq m\mu(A)\leq m \frac{\eps}{5}.$$
\noindent
Thus we have that
$\mu(X\backslash [\cF])\leq \eps$.
This proves the claim and hence Theorem \ref{ornstein} follows. 
\end{proof}

\begin{proof}[Proof of Theorem \ref{tetelnegy}]
Using Theorem \ref{ornstein}, similarly to the construction in Theorem \ref{CFWtetel}, 
we can define a 
sequence $\{\cF_n\}^\infty_{n=1}$ of Borel packings of $G$ such that
\begin{itemize}
\item each tile of $\cF_n$ is contained in a tile of $\cF_{n+1}$,
\item $\lim_{n\to\infty} \mu([F_n])=1$ holds for all $G$-invariant measures $\mu$.
\end{itemize}
\noindent
Therefore, $G$ is Borel-hyperfinite on 
$A=\cup_{n=1}^\infty [F_n]$ and the $\mu$-measure of 
$X\backslash A$ is zero for any $G$-invariant measure $\mu$. Let $Z$ be the union of the components of $X$ intersecting $X\backslash A$ and let $Y$ be the complement of $Z$. Then $\cE_G$ restricted on $Y$ is a hyperfinite equivalence relation and $\mu(Y)=1$ for all $G$-invariant measures. 
Hence, our theorem follows. 
\noindent
\end{proof}

\section{The proof of Theorem \ref{almostfinite}} \label{Sect6}
\noindent
Let us recall the Lyons-Nazarov  Theorem.
\begin{proposition} [\cite{LN} Theorem 1.1, \cite{Conley18} Proposition 2.1 ] \label{Laz} Let $G$ be a Borel graph on the standard Borel space $X$ and $\mu$ be an invariant probability measure on $X$. Suppose that $Y$ and $Z$ are disjoint subsets of $X$ and $H\subset Y\times Z$ is a bipartite graph of bounded degree, such that if $(y,z)\in H$ then $y$ and $z$ are in the same $G$-orbit. For $A\subset Y$ denote by $N(A)$ the subset of $Z$ consisting of the neighbours of the vertices in $A$.
Assume that for some positive $\eps>0$ for every $A\subset Y$ we have that $(1+\eps)\mu(A)\leq \mu(N(A))$. Then there is a subset $Y'\subset Y$, $\mu(Y\backslash Y')=0$ and injective Borel map $\phi:Y'\subset Z$ compatible with $H$.
\end{proposition}
\begin{proof}[Proof of Theorem \ref{almostfinite}] Let $(G,X)$ be a uniformly Borel amenable F\o lner graph on the standard Borel space $X$ and $\mu$ be an 
invariant probability measure on  $X$. 
Fix $\eps>0$ and let $\eps'=\frac{1}{5}\eps$. We can assume that all components of $G$ are infinite, since uniformly Borel amenable F\o lner graphs of finite components are clearly Borel almost finite. By Proposition \ref{quasi}, 
there exists $k>0$ and $\eps'>\rho>0$ and an $(\eps',k)$-F\o lner Borel packing $\cal{T}$ of $G$ such that
such that for every $\rho$-F\o lner subset $J$ 
we have
\begin{equation}\label{fedes} \frac{|J_{\cT}|}{|J|}\geq 1-\eps'\,\end{equation}\noindent  By 
Theorem \ref{mainletisztult2}, we have some positive integer $m$ and
$\rho$-F\o lner tilings \\$\cS_1,\cS_2,\dots \cS_m$ such that 
for every $x \in X$, the number of indices $i$ with $x \notin [\cS_i]$ is less than $\varepsilon' m$. Let $\cQ$ be a Borel packing of $X$ such that each tile $T\in \cT$ contains
exactly one tile $Q\in \cQ$ and
\begin{equation}\label{fedes2} 3\eps'|T|\leq |Q| \leq 4\eps'|T|. \end{equation}  It is easy to see that such Borel packing exists. For $1\leq i \leq m$ let
$$Y_i=\cup_{S\in \cS_i} S\backslash S_\cT\,.$$
\noindent
Finally, let $Y=\cup_{i=1}^m Y_i$ and $Z=[Q]$.
By \eqref{fedes} and \eqref{fedes2}, we have three injective Borel maps $ \phi^1_i: Y_i\to Z, \phi^2_i: Y_i\to Z$ and $\phi^3_i: Y_i\to Z$ such that
\begin{itemize}
\item The maps $\phi^j_i$ have disjoint ranges.
\item Every $\phi^j_i$ maps all points
of the tile $S\in \cS_i$ into the tile $S$.
\end{itemize}
\noindent Let $f_i:Y\times Z\to \R$ be defined to be $1$ on $(y,z)$ if $y\in Y_i$ and $\phi^j_i(y)=y$ for some $1\leq j \leq 3$. Otherwise, let $f_i(y,z)=0$. Now, let $f:Y\times Z\to \R$ be defined as $\sum_{i=1}^m f_i(Y,Z)$.

\noindent
By Proposition \ref{Laz} and Lemma \ref{61lemma}, there exists an injective Borel map $\psi:Y\to Z$ such that for all $y\in Y$ we have that
$\psi(y)=\phi^i_j$ some $1\leq i \leq m$ and $1\leq j \leq 3$. For a tile $T\in \cT$ let $T'=\psi^{-1}(T)\cup T$. So $\cT'=\cup_{T\in \cT} T'$ is a Borel packing such that
$[\cT']=X$.
For every tile $T'\in\cT'$ we have
that$T'=\psi^{-1}(T)\cup T$. That is by \eqref{fedes2}, 
$$|\partial(T')|\leq |\partial(T)|+| \Phi^{-1}(T)|\leq 5\eps' |T'|.$$
\noindent
Hence by the definition of $\eps'$, our theorem follows.
\end{proof}
\begin{lemma} \label{61lemma}
For $A\subset Y$ let $N(A)$ denote the subset of $Z$ consisting ot hte neighbours of the vertices in $A$.
Then, for every $A$ we have that
$$ \mu(A)\leq \frac{2}{3} \mu(N(A))\,.$$ \end{lemma}
\begin{proof} For the function $f$ defined in the proof of Theorem \ref{almostfinite} applied the Mass Transport Principle to obtain
$$\int_A f(y,z)\,d\mu(y)=\int_B f(y,z)\,d\mu(z)\,.$$
\noindent
Since every $y\in Y$ is covered by at least $(m-\eps' m)$ packings $\cT_i$ we have that
\begin{equation} \label{elsopont}
\int_A f(y,z)\,d\mu(y)\geq \frac{3}{2}m \mu(A)\,.
\end{equation}
\noindent
By the construction of the maps $\phi^i_j$ we have that
\begin{equation}\label{masodikpont}\int_{N(A)} f(y,z)\,d\mu(z)\leq m\mu(N(A))\,.\end{equation}
\noindent
Hence by \eqref{elsopont} and \eqref{masodikpont}, our lemma follows.
\end{proof} \noindent

\section{Questions and some remarks}\label{further}

\noindent
At the heart of the paper are our newly introduced Definitions \ref{defuniborel}, \ref{URH} and \ref{multipack}. In this section we look into possible variants that one would get with relaxed assumptions. 

\noindent
Bernshteyn and Weilacher \cite{BW24} defined a nonuniform version of tight Borel multipackability independently, and they apply it for problems in Borel combinatorics (such as fractional matchings and fractional colorings). 
They define an $\eps${\it -pancake} to be a system of Borel packings $\{\cT_i\}_{i=1}^m$ as in Definition \ref{multipack} ($\eps$-tight multipackings), with only two differences:
\begin{itemize}
\item the packings have to be 1-separated,
\item packings of arbitrarily large finite tiles are allowed (while our definition includes a uniform upper bound on the diameters of tiles).
\end{itemize}
Say 
that the bounded-degree graph $G$ {\it admits pancakes}, if for every $\eps>0$ there exists an $\eps$-pancake $\{\cT_i\}_{i=1}^m$. The first difference is only technical: by Lemma \ref{hianypotlo}, we can always switch to separated tight multipackings with some sacrifice on the constants. Hence if $G$ is tightly multipackable then it admits pancakes.

\noindent
Repeating the second part of the proof of Proposition \ref{tight}, one can immediately see that if $G$ admits pancakes then it is Borel amenable. 

\noindent
The next proposition is intended to clarify the relation between Borel graphs that are tightly multipackable and those that admit pancakes.
\begin{proposition}\label{fracvs} Let $G$ be a Borel graph. Then $G$ is tightly multipackable if and only if $G$ admits pancakes and it is Property A as a combinatorial graph.
\end{proposition}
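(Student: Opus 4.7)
The plan is to reduce both directions to the equivalence between tight multipackability and uniform Borel amenability (Theorem~\ref{main}(1)) and then exploit the coloring-based construction from its proof. The forward direction is quick: if $G$ is tightly multipackable, Lemma~\ref{hianypotlo} applied with $s=1$ turns an $\eps$-tight multipacking into a $1$-separated $d^2\eps$-tight multipacking whose tiles, being of bounded diameter, are in particular finite, so this is a pancake and $G$ admits pancakes. Simultaneously, by Theorem~\ref{main}(1), $G$ is uniformly Borel amenable, and the witnessing Borel functions $p_n$ of Definition~\ref{defuniborel}, viewed merely as functions on $V(G) \times V(G)$, satisfy the three clauses of Property~A for $G$ as a combinatorial graph.

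For the backward direction I again aim for uniform Borel amenability. Given $\eps > 0$, I would first pick $n$ with $1/n \ll \eps$ and use combinatorial Property~A to fetch a (non-Borel) function $p : V(G) \times V(G) \to \R$ supported within $r$-balls and satisfying the corresponding near-invariance bound. Next I would use admits pancakes to produce an $\eps_0$-pancake $\{\cT_i\}_{i=1}^m$ with $\eps_0 \ll \eps$, and apply Lemma~\ref{hianypotlo} to enlarge its separation enough that, for every $i$, all but an $\eps$-fraction of $x \in [\cT_i]$ are \emph{deep} in their tile, meaning $B_r(x)$ sits inside the tile.

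With these ingredients the construction mimics the color argument in the first part of the proof of Theorem~\ref{main}(1), now carried out tile-by-tile. Using a KST Borel coloring $\chi$ of $G$ with enough colors \cite{KST}, for each permutation $\sigma$ of the palette I would define a map $\phi_{i,\sigma}$ sending each deep $x$ to the $\sigma\circ\chi$-maximal element of $\Supp(p(x,\cdot))$, and declare the fibers of $\phi_{i,\sigma}$ (intersected with the deep part of each $T \in \cT_i$) as the tiles of the sub-packing associated to $(i,\sigma)$. Each such fiber sits in a $2r$-ball, and the averaging identity from the proof of Theorem~\ref{main}(1) forces any adjacent deep pair $x \sim y$ to be separated by at most a $\tfrac{2d}{n+2}$-fraction of the $\sigma$. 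Combining this with the tightness of the pancake and the depth estimate above yields an $\eps$-tight Borel multipacking with bounded-diameter tiles.

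The hard part will be making the maps $\phi_{i,\sigma}$ genuinely Borel, since the function $p$ itself is not. I would handle this by letting $\phi_{i,\sigma}$ depend only on the isomorphism type of the $\chi$-colored marked $r$-neighborhood of the tile $T$ containing $x$: there are only countably many such types, each type is a Borel function of $(T,\chi)$, and combinatorial Property~A lets us fix once and for all one canonical choice of ``local $p$-data'' per type so that the coloring bound still goes through on every edge joining two (possibly different) types. This type-based Borel selection, together with the verification that the cumulative errors from depth, pancake tightness, and color mismatch add up to at most $\eps$, is the delicate step of the argument.
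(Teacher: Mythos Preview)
Your forward direction is correct and matches the paper. For the backward direction your core insight is right---the finiteness of pancake tiles is what lets one upgrade combinatorial Property~A data to something Borel---but the paper takes a much cleaner route that avoids the complications you introduce.

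The paper does not re-run the coloring construction inside each tile. Instead it observes that for each layer $\cF_i$ of the pancake, the Borel graph $G_i$ induced on $[\cF_i]$ has only finite components (the tiles) and, being a subgraph of $G$, inherits Property~A. One then decomposes $G_i$ into Borel pieces $G_i^k$ according to the isomorphism type $H_k$ of the component; the abstract disjoint union $H=\bigsqcup_k H_k$ sits inside $G_i$ and hence is Property~A, and its witness functions $p_n$ lift---constantly on each $G_i^k$---to \emph{Borel} witnesses $\tilde p_n$ for $G_i$. Thus each $G_i$ is uniformly Borel amenable, so Theorem~\ref{main}(1) applied to $G_i$ yields an $\eps$-tight multipacking $\{\cT_j^i\}_{j=1}^q$ of bounded diameter. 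Superposing over $i=1,\dots,m$ gives a $2\eps$-tight multipacking of $G$. No ``deep points'', no second KST coloring, no tile-by-tile reconstruction of $\phi_{i,\sigma}$.

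Compared with this, your plan has two wrinkles worth flagging. First, Lemma~\ref{hianypotlo} gives a per-vertex bound (for each $x$, few $i$'s miss it), not the per-layer claim ``for every $i$, all but an $\eps$-fraction of $x\in[\cT_i]$ are deep'' that you state; fortunately only the per-vertex statement is needed for tightness of the final multipacking, so this is easily repaired. Second, the step you call ``hard''---choosing canonical $p$-data per type so that the Property~A bound survives across edges---is exactly what the paper's decomposition makes transparent: since adjacent deep vertices lie in the \emph{same} tile, fixing $p$ once per (colored) tile-type via the Property~A witness for $H$ gives the required consistency for free, and there is no genuine ``edge between different types'' case to worry about. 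Your formulation in terms of marked $r$-neighborhoods obscures this and makes the Borel selection look harder than it is.
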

\proof As we observed in Section \ref{UBFG}, if $G$ is uniformly Borel amenable, then it is of Property A as a combinatorial graph. Hence Proposition \ref{tight} gives us the ``only if'' part of the proof. 

\noindent
Now fix $\eps>0$ and assume that $G$ admits pancakes and it is of Property A as a combinatorial graph (see Section \ref{UBFG}). Let $\{\cF_i\}^m_{i=1}$ be and $\eps$-pancake. 
We claim that
for every $1\leq i \leq m$, the Borel graph $G_i$ induced by the tiles of $\cF_i$ is uniformly Borel amenable. 

\noindent
To see this, first note that the subgraph of a Property A graph is of Property A (this is well-known, e.g. it follows easily from \cite{elektimar} Theorem 1), hence we have that $G_i$ is of Property A. Moreover, the Borel graph $G_i$ can be written as the
union of Borel graphs $G^k_i$, $1\leq k < \infty$, where all the components of $G^k_i$ are isomorphic to the same finite graph $H_k$. Since $G_i$ is of Property A, the disjoint union $H$ of the finite graphs $\{H_k\}^\infty_{k=1}$ is also of Property A.  Then the functions 
 $\{ p_n(x,y)\}^\infty_{n=1}$  
 witnessing Property A for $H$ extend to functions $\{ \tilde{p}_n(x,y)\}^\infty_{n=1}$ witnessing that $G_i$ is uniformly Borel amenable, as we claimed. 

\noindent
Using Proposition \ref{tight}, there exists some $q>0$ and packings $\{\cT_j^i\}_{j=1}^q$ for each $1\leq i \leq m$ that form an $\eps$-tight multipacking of $G_i$.
The packings
$\{\{\cT^i_j\}_{j=1}^q\}_{i=1}^m$ 
form a $2\eps$-tight multipacking of $G$. Therefore, $G$ is uniformly Borel amenable. \qed

\noindent
Proposition \ref{fracvs} provides us with further sufficient conditions for a Borel graph to admit pancakes, through the equivalence of tight multipackability, uniform Borel amenability and randomized hyperfiniteness (Proposition \ref{tight} and Theorem \ref{mainletisztult}).
 
\begin{example}\label{expanderek}
For a bounded-degree Borel graph that is not tightly multipackable but admits pancakes, consider a Borel graph of finite components such that the components contain an expander sequence.
(One could easily turn this example into a Borel graph of infinite components.) 
\end{example}

\noindent
What happens if we remove one of the boundedness assumptions in the definition of uniform Borel amenability, or in the definition of randomized hyperfiniteness?

\begin{definition}\label{strictlyborel}
The bounded-degree Borel graph $G$ is \textbf{strictly Borel ame\-nable} if (1) and (2) of Definition \ref{defuniborel} is satisfied.
\end{definition}
\noindent
Example \ref{expanderek} shows a strictly Borel amenable graph that is not uniformly Borel amenable. The following question seems to be of independent interest. 
\begin{question}
Does there exist a Borel graph $G$ of bounded degree such that $\cE_G$ is Borel amenable, but $G$ is not strictly Borel amenable? 
\end{question}

\begin{proposition}\label{strictly}
A bounded-degree Borel graph is strictly Borel amenable if and only if for every $\eps$ there is a random finite equivalence relation $\cE$ such that for every pair $x,y$ of neighbors the probability that $x$ and $y$ are in different classes of $\cE$ is less than $\eps$. 
\end{proposition}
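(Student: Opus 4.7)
The ``if'' direction is a direct transcription of the corresponding part of Theorem \ref{mainletisztult} and does not make use of the bounded-diameter hypothesis. Given, for each $n$, a random finite equivalence relation $\cE_n$ (from the hypothesis applied with $\eps=1/(2n)$) satisfying $\Prob(x\not\equiv_{\cE_n} y)<1/(2n)$ for every pair of neighbors, I define
\[
p_n(x,y):=\mathbb{E}\!\left[\frac{\mathbf{1}_{y\in\cE_n(x)}}{|\cE_n(x)|}\right].
\]
Conditions (1) and (2) of Definition \ref{strictlyborel} are then immediate: the first sums to $1$ because a uniform probability on a finite set sums to $1$, and the second follows because the two ``uniform on class'' measures coincide on $\{x\equiv_{\cE_n} y\}$, so by Fubini $\sum_z |p_n(x,z)-p_n(y,z)|\leq 2\Prob(x\not\equiv_{\cE_n} y)<1/n$.

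For the ``only if'' direction, the plan is to mimic the argmax coupling used in the proof of Theorem \ref{main}(1). There, the deterministic Borel coloring relied crucially on bounded-diameter supports, which are no longer available; I replace it by shared i.i.d.\ uniform labels, and replace the bounded-diameter supports by a truncation to finitely-supported distributions. Concretely, given $\eps>0$ I choose $n$ with $4/n<\eps$ and a small $\delta>0$. Define $\tilde p_n(x,y)$ by zeroing out entries of $p_n(x,y)$ smaller than $\delta$ and renormalizing, so the support $S_x:=\Supp(\tilde p_n(x,\cdot))$ is finite with $|S_x|\leq 1/\delta$ and $\sum_y |\tilde p_n(x,y)-\tilde p_n(z,y)|\leq 1/n + O(\delta)$ for $x\sim z$. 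Draw i.i.d.\ uniforms $(U_y)_{y\in X}$ on $[0,1]$ and set $\phi(x):=\arg\max_{y\in S_x}U_y$, which is well defined almost surely since $S_x$ is finite. A computation parallel to the one following \eqref{becsles} shows that for $x\sim z$, the event that the maximizer of $U$ on $S_x\cup S_z$ falls in $S_x\cap S_z$ has probability at least $1 - 2/(n+2) - O(\delta) > 1-\eps$, and on this event $\phi(x)=\phi(z)$.

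The main obstacle, and the step I expect to require the most care, is that the fibres $\phi^{-1}(w)$ may be infinite: without the bounded-diameter hypothesis the set $\{x:w\in S_x\}$ can be infinite, and a direct computation shows that $\phi^{-1}(w)$ is infinite with positive probability in natural configurations. Simply declaring the fibres to be the classes of $\cE$ therefore does \emph{not} produce a finite equivalence relation. My plan to remedy this is to refine the partition by a second, independent layer of randomness: build a nested sequence of successively finer argmax constructions $\phi^{(1)},\phi^{(2)},\ldots$ using auxiliary independent uniforms and a sequence of truncation thresholds $\delta_k\downarrow 0$, and define the class of $x$ in $\cE$ as the set of $y$ such that $\phi^{(k)}(x)=\phi^{(k)}(y)$ for every $k$ up to a random level $\tau(x)$. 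The level $\tau(x)$ is chosen, via a Borel--Cantelli argument on level-wise probabilities, to be almost surely finite and large enough that the class of $x$ has stabilized to a finite set, while the cumulative loss in the neighbor-coincidence probability across all levels stays below $\eps$. The delicate bookkeeping---ensuring both almost surely finite classes and the neighbor-coupling bound at the same time---is what I expect to be the crux of the ``only if'' direction.
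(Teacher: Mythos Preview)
Your ``if'' direction matches the paper's (implicit) argument and is fine.

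For the ``only if'' direction, the overall plan (truncate to finite support, then random argmax coupling) is the paper's plan too, but your truncation step has a genuine gap. Zeroing out values below $\delta$ does not control the removed mass: if $p_n(x,\cdot)$ places mass $1/2$ on one point and mass $\delta/2$ on each of $1/\delta$ further points, the truncation throws away half the mass, not $O(\delta)$; your claimed bound $\sum_y|\tilde p_n(x,y)-\tilde p_n(z,y)|\le 1/n+O(\delta)$ is therefore false in general. Even granting it, the unweighted argmax on $S_x$ does not give the stated coupling bound: small $\ell^1$ distance between $\tilde p_n(x,\cdot)$ and $\tilde p_n(z,\cdot)$ does not force $|S_x\cap S_z|/|S_x\cup S_z|$ to be close to $1$ (take $\tilde p_n(x,\cdot)$ a Dirac mass at $a$ and $\tilde p_n(z,\cdot)$ with mass $1-1/n$ on $a$ and the remainder spread thinly over many points of $S_z$). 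The computation after \eqref{becsles} that you invoke relied on equal-size uniform supports; the general case in Theorem~\ref{main}(1) needed the weighted $Q_n(x)$ blow-up, and you would need it here as well.

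The paper repairs the first issue by truncating by \emph{graph distance} rather than by value: set $\Supp'(p_n(x))=\{y\in\Supp(p_n(x)):d_G(x,y)\le r\}$ for the minimal $r$ with $p_n(x,\cdot)$-mass on this set at least $1-\eps/3$. This caps the removed mass at $\eps/3$ by construction, after which the paper appeals to ``similarly to Theorem~\ref{main}'' for the coupling estimate. The paper's random ordering is a binary-tree permutation of $X=[0,1)$ rather than your i.i.d.\ uniforms, but for the argmax coupling these are interchangeable.

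Your concern about infinite fibres is legitimate: neither truncation yields a uniform radius bound, and the paper's proof is explicitly labelled a sketch and stops at this point without addressing it. Your proposed multi-level refinement is plausible in spirit, but as you acknowledge, the bookkeeping (simultaneously forcing a.s.\ finite classes and keeping the cumulative neighbour-coupling loss below $\eps$) is not carried out; as written it is not yet a proof.
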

\noindent Note that the random equivalence relation in the theorem is very close to a pancake, the only difference is that a pancake has to consist of finitely many packings with the uniform probability measure on them, while here we allow more general probability measures.

\begin{proof}[Sketch of proof for Proposition \ref{strictly}]
The ``if'' direction is straightforward. For the other direction, set $n\geq 3/\eps$.
Modify the proof of Proposition \ref{tight}, part {\it (1)}. We may assume that $X=[0,1)$.

\noindent Define $Q_r(p_n(x)):=\{y\in\Supp(p_n(x)), d_G(x,y)\leq r\}$, and let $\Supp' (p_n(x)):=Q_r(p_n(x))$ for the minimal $r$ such that
$$\sum_{y\in Q_r(p_n(x))} p_n(x,y)\geq 1-\eps/3.$$ 
Consider an infinite rooted binary tree, and take a random automorphism where at each vertex we switch the two subtrees of its descendants with probability $1/2$, and do it independently for every vertex. The resulting random automorphism naturally generates a random permutation $\pi$ on $X$ through its subdivisions to binary intervals. Now let $\chi_\pi : X\to X$ map $x$ to $y\in\Supp'(p_n(x))$ if $\pi(y)$ is the maximum of $\{\pi (z):z\in\Supp' (p_n(x))\}$. This map has the property that $\chi_\pi(x)$ is uniform on $\Supp' (p_n(x))$.
Similarly to the proof of Proposition \ref{tight}, we can now deduce that the random equivalence relation $\cE$ generated by the packing $\{\chi_\pi^{-1} (w)\,:\, w\in \X, \chi_\pi^{-1} (w)\not=\emptyset\}$ satisfies the claim.
\end{proof}

\noindent
Consider two successive relaxations of randomized hyperfiniteness (Definition \ref{URH}).
\begin{enumerate}
\item  One can drop the boundedness condition for the equivalence classes of $\cE_n$. 
\item  One can also drop the second condition in Definition \ref{URH} and only require that for every pair $x,y$ of adjacent vertices the probability that $x$ and $y$ are not in the same class of $\cE_n$ tend to $0$ as $n$ tends to infinity.
\end{enumerate}
\noindent
The classical definition of hyperfiniteness is known to be equivalent whether we require the finite equivalence classes to be bounded or not (see Definition \ref{hyperfinitegraf} and the remark afterwards). But if we compare randomized hyperfiniteness to its relaxed version (1) above, 
it turns out that the two are not equivalent. Example \ref{expanderek} satisfies the relaxed version in (1) (take every element of the ``random sequence'' to be the atomic measure on $G$), while it cannot be randomized hyperfinite, because then Theorem \ref{mainletisztult} would imply that it is uniformly Borel amenable, and hence Borel amenable, a contradiction.

\noindent
In light of Theorem \ref{mainletisztult}, the following question looks reasonable. 
\begin{question} Is it true that the first relaxation is
equivalent to strict Borel amenability and the second relation is equivalent to Borel amenability?
\end{question}

\noindent
It is clear that the first relaxation implies strict Borel amenability, but the other direction does not seem to follow by adopting Proposition \ref{strictly}, because it is not clear how to make the sequence of finite equivalence relations {\it increasing}, when the tiles are not bounded.

\noindent
The next two questions on uniformly Borel amenable graphs arise naturally.
\begin{question}\label{kapcsolat}
Suppose that a Borel graph is Borel amenable and it is generated by the free action of an exact group. Is the graph uniformly Borel amenable? More generally, if a Borel graph is Borel amenable and has Property A, is it uniformly Borel amenable?
\end{question}
\begin{question} \label{q3}
Let $\cE$ be a countable Borel amenable equivalence relation. Can $\cE$ always be induced by a uniformly Borel amenable graph?
\end{question}
\noindent
Note that if Conjecture 1 holds then the answer for the previous question is affirmative, since hyperfinite equivalence relations can be generated by graphs that consist of finite cycles and infinite paths.

\vi
Finally, let us highlight that several results of this paper hold in the continuous context, where packings are defined by clopen sets of the Cantor set. We intend to pursue this direction in a separate paper. 

\vi
\noindent
{\bf Acknowledgements.} The first author was partially supported by the KKP 139502 grant, the second author was partially supported by the Icelandic Research Fund grant number 239736-051 and the ERC
grant No. 810115-DYNASNET.
\vi
\textbf{Conflict of interest statement.}

\noindent
There is no conflict of interest.
\vi
\textbf{Data availability statement.}

\noindent
My manuscript has no associated data.

\vi
\emph{G\'abor Elek}, Department of Mathematics and Statistics, Lancaster University, Lancaster, United Kingdom and HUN-REN R\'enyi Institute of Mathematics, Budapest, Hungary.

\noindent
\texttt{g.elek[at]lancaster.ac.uk}
\vi
\emph{\'Ad\'am Tim\'ar}, Division of Mathematics, The Science Institute, University of Iceland, Reykjavik, Iceland
and
HUN-REN R\'enyi Institute of Mathematics, Budapest, Hungary.

\noindent
\texttt{madaramit[at]gmail.com}\\ 
\end{document}